\newcommand{\R}{\mathbb{R}}
\newcommand{\N}{\mathbb{N}}
\newcommand{\W}{\mathcal{W}}
\newcommand{\loc}{\mathrm{loc}}
\let\div\relax\DeclareMathOperator{\div}{div}
\DeclareMathOperator*{\esssup}{ess\,sup}
\DeclareMathOperator{\dist}{dist}
\DeclareMathOperator{\supp}{supp}
\newcommand{\blank}{{\mkern 2mu\cdot\mkern 2mu}}
\newcommand{\set}[2]{\left\{ #1 \mid #2 \right\}}
\newtheorem{theorem}{Theorem}
\newtheorem{lemma}[theorem]{Lemma}
\newtheorem{proposition}[theorem]{Proposition}
\newtheorem{definition}[theorem]{Definition}
\theoremstyle{remark}
{}
\title{Variational problems in $L^\infty$ involving semilinear second order differential operators}
\author{Nikos Katzourakis \\ \small Department of Mathematics and Statistics, University of Reading, Whiteknights, Pepper Lane, Reading, RG6 6AX, UK. E-mail: n.katzourakis@reading.ac.uk
\and Roger Moser \\ \small Department of Mathematical Sciences,
University of Bath, Bath BA2 7AY, UK.
E-mail: r.moser@bath.ac.uk}
\begin{document}

\maketitle

\begin{abstract}
For an elliptic, semilinear differential operator of the form $S(u) = A : D^2 u + b(x, u , Du)$,
consider the functional $E_\infty(u) = \esssup_\Omega |S(u)|$.
We study minimisers of $E_\infty$ for prescribed boundary data. Because the functional is not differentiable,
this problem does not give rise to a conventional Euler-Lagrange equation. Under certain conditions,
we can nevertheless give a system of partial differential equations that all minimisers must
satisfy. Moreover, the condition is equivalent to a weaker version of the variational problem.
\end{abstract}

\section{Introduction}

Variational problems involving an $L^\infty$-norm tend to present challenges
not shared by more conventional variational problems. Indeed, the underlying
functionals are typically not differentiable, not even in the Gateaux sense,
and therefore, the usual derivation of an Euler-Lagrange equation does not
work. Sometimes it is possible to derive an associated partial differential
equation nevertheless (the Aronsson equation \cite{Aronsson:68} is an example) but such
an equation is typically only degenerate elliptic and may have
discontinuous coefficients \cite{Katzourakis:12}. Indeed, for higher order problems (as
studied in this paper), the equations may be fully nonlinear and
not elliptic at all \cite{Katzourakis-Pryer:20}. Moreover, while the
interesting functionals in the calculus
of variations in $L^\infty$ typically enjoy a certain degree of convexity,
they are not strictly convex. Therefore, minimisers are not
usually expected to be unique.

All these difficulties notwithstanding, there are some problems in the theory
that are understood very well. This applies in particular to a class
of problems involving first order derivatives of scalar functions.
More precisely, let $\Omega \subseteq \R^n$ be a bounded Lipschitz domain
and consider functions $u \colon \Omega \to \R$ with fixed boundary data.
For a function $F \colon \Omega \times \R \times \R^n \to \R$, consider the
problem of minimising $\esssup_{x \in \Omega} F(x, u(x), Du(x))$.
Under certain conditions on $F$, there is a good theory giving, for
example, existence of solutions with good properties
\cite{Aronsson:67, Barron-Jensen-Wang:01.1, Barron-Jensen-Wang:01.2},
uniqueness \cite{Jensen:93, Juutinen:98}, and (for $F(x, y, z) = |z|^2$)
regularity \cite{Evans-Savin:08, Evans-Smart:11.1, Evans-Smart:11.2}.

Recently, the authors have studied a different, second order
variational problem in $L^\infty$ and established good properties of its solutions \cite{Katzourakis-Moser:19}. Suppose now that we wish to minimise
a quantity such as $\esssup_{x \in \Omega} F(x, \Delta u(x))$. Assuming
that $F \colon \Omega \times \R \to \R$ satisfies some
growth and convexity conditions, and that $\Omega$, $F$, and the boundary
data are sufficiently regular, it turns out that there exists a unique minimiser,
which satisfies a certain system of partial differential equations.
Conversely, any solution of that system corresponds to a minimiser.
Some of the underlying ideas go back to earlier work \cite{Moser-Schwetlick:12, Sakellaris:17},
and similar tools have in the meantime also been used for other problems
\cite{Katzourakis-Parini:17, Moser:19, Gallagher-Moser:22}.

In the current paper, we study extensions of these results. This is one of a
pair of works that examine two different types of generalisations.
Here we replace $\Delta u$ by more general, semilinear differential
operators, while restricting our attention to $F(x, \xi) = |\xi|$ (but
$x$-dependence is still included implicitly, because the coefficients
of the differential operator need not be constant any more). In a companion paper
\cite{Katzourakis-Moser:22}, we study a quantity of the form
$\esssup_{x \in \Omega} F(x, u(x), \Delta u(x))$
for a fairly general class of functions $F \colon \Omega \times \R^2 \to \R$.
There is of course some overlap between the two settings, and it would
be interesting to have a common framework, but this appears to be
difficult for technical reasons.

We consider the following situation. Let $\Omega \subseteq \R^n$, as above,
be a bounded Lipschitz domain. (For some of our results we will need to
impose additional regularity assumptions on $\Omega$.)
Let $A \in C^2(\overline{\Omega}; \R^{n \times n})$ and
$b \in C^2(\overline{\Omega} \times \R \times \R^n)$. We assume that
there exists $\lambda > 0$ such that for every $x \in \Omega$, the
matrix $A(x)$ is symmetric and satisfies $A(x) : \zeta \otimes \zeta \ge \lambda |\zeta|^2$ for all $\zeta \in \R^n$,
where the colon denotes the Frobenius inner product. Define the semilinear differential operator
\begin{equation} \label{eq:differential-operator}
S(u) = A : D^2 u + b(x, u, Du)
\end{equation}
for $u \colon \Omega \to \R$, where $Du$ is the gradient and $D^2 u$ is
the Hessian of $u$. We are interested in the functional
\[
E_\infty(u) = \esssup_\Omega |S(u)|.
\]

So far we have not mentioned the space on which this functional is defined.
In order to obtain a good theory, we need to work in a Sobolev space that
may appear unconventional, but is quite natural for the problem. We define
\[
\W^{2, \infty}(\Omega) = \bigcap_{1 < q < \infty} \set{u \in W^{2, q}(\Omega)}{A : D^2 u \in L^\infty(\Omega)}.
\]
Furthermore,
\[
\W_0^{2, \infty}(\Omega) = \W^{2, \infty}(\Omega) \cap W_0^{2, 2}(\Omega).
\]

Given $u_0 \in \W^{2, \infty}(\Omega)$, we wish to study minimisers of
$E_\infty$ in $u_0 + \W_0^{2, \infty}(\Omega)$. For other variational
problems, the critical points would also be of interest, but in this
case, the concept is not useful as $E_\infty$ is not differentiable.
We therefore work with the following idea instead.

\begin{definition}[Almost-minimiser] \label{def:almost-minimiser}
A function $u \in \W^{2, \infty}(\Omega)$ is called an \emph{al\-most-minimiser}
of $E_\infty$ if there exists $M \in \R$ such that
\[
E_\infty(u) \le E_\infty(u + \phi) + M \|\phi\|_{W^{1, \infty}(\Omega)}^2
\]
for every
$\phi \in \W_0^{2, \infty}(\Omega)$.
\end{definition}

Intuitively, the definition provides a substitute for the idea that the
Taylor expansion of $E_\infty$ has a vanishing first order term at $u$. Since
$E_\infty$ is not differentiable, it does of course not have a Taylor expansion.
Instead, we use a quadratic form $Q \colon \W^{2, \infty}(\Omega) \to \R$ such that
the graph of $Q$ touches the graph of $E_\infty(\blank + u) - E_\infty(u)$ from below.
This is related to the notion of second order subjets or subdifferentials that appears in the
theory of viscosity solutions of partial differential equations
(discussed, e.g., in a survey article by Crandall, Ishii, and Lions \cite{Crandall-Ishii-Lions:92}
or the introductory text by the first author \cite{Katzourakis:15.1}).
In the case of Definition \ref{def:almost-minimiser}, the condition
is formulated in terms of the norm of $W^{1, \infty}(\Omega)$, so it may
be best to think of $E_\infty$ as a functional defined on $W^{1, \infty}(\Omega)$
in this context, with $E_\infty(v) = \infty$ when $v \not\in \W^{2, \infty}(\Omega)$.
There is, however, some flexibility here. Our main results remain true
if $\|\blank\|_{W^{1, \infty}(\Omega)}$
is replaced, e.g., by $\|\blank\|_{W^{2, q}(\Omega)}$ for any $q < \infty$.

We show below that almost-minimisers can be characterised in terms of a
system of partial differential equations. In order to formulate this,
we use the formal linearisation of the operator $S$ at a point
$u \in \W^{2, \infty}(\Omega)$, denoted $S_u'$, and its formal $L^2$-adjoint,
denoted $S_u^*$. We use the notation $(x, y, z)$ for the variables
in $\Omega \times \R \times \R^n$. We further write $b_y$ for the partial
derivative of $b$ with respect to $y$, and $b_z$ for the vector comprising
the partial derivatives with respect to $z_1, \dotsc, z_n$. Then
\[
S_u' \phi = A : D^2\phi + b_z(x, u, Du) \cdot D\phi + b_y(x, u, Du)\phi
\]
and
\[
S_u^* f = \div \div (f A) - \div\bigl(f b_z(x, u, Du)\bigr) + f b_y(x, u, Du).
\]
In the div-div term, the divergence is applied once column-wise and once
row-wise.

We are interested in the equation
\begin{equation} \label{eq:S_u^*f=0}
S_u^* f = 0.
\end{equation}
We can make sense
of this for $f \in L^1(\Omega)$: if
\begin{equation} \label{eq:weak-solution}
\int_\Omega f S_u'\phi \, dx = 0
\end{equation}
holds for all $\phi \in C_0^\infty(\Omega)$, then we say that $f$ is a
\emph{weak solution} of the equation.

Our first main result is as follows.

\begin{theorem} \label{thm:sufficiency}
Let $u_\infty \in \W^{2, \infty}(\Omega)$ be
such that there exist $e_\infty \ge 0$ and $f_\infty \in L^1(\Omega) \setminus \{0\}$
satisfying
\begin{equation} \label{eq:Euler-Lagrange1}
|f_\infty| S(u_\infty) = e_\infty f_\infty
\end{equation}
almost everywhere in $\Omega$ and
\begin{equation} \label{eq:Euler-Lagrange2}
S_{u_\infty}^* f_\infty = 0
\end{equation}
weakly. Then $u_\infty$ is an almost-minimiser of $E_\infty$.
\end{theorem}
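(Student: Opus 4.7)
The overall plan is to use $f_\infty$ as a Lagrange-multiplier-like object: pairing it with the difference $S(u_\infty+\phi)-S(u_\infty)$, equation~\eqref{eq:Euler-Lagrange2} annihilates the linear-in-$\phi$ contribution, while the pointwise identity~\eqref{eq:Euler-Lagrange1} extracts $e_\infty$ from the zeroth-order part. The residue is a quadratic error that matches the right-hand side of the almost-minimiser inequality.

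First, since only the $b$-part of $S$ is nonlinear, a second-order Taylor expansion of $b$ in the $(y,z)$-variables around $(u_\infty, Du_\infty)$ gives
\[
S(u_\infty + \phi) = S(u_\infty) + S_{u_\infty}'\phi + R(\phi),
\]
with a pointwise bound $|R(\phi)| \le C(\phi^2 + |D\phi|^2) \le C\|\phi\|_{W^{1,\infty}(\Omega)}^2$. The constant $C$ depends only on the $C^2$-norm of $b$ on a compact set containing the range of $(\blank, u_\infty, Du_\infty)$, which exists because $u_\infty, Du_\infty \in L^\infty(\Omega)$ by Sobolev embedding from $u_\infty \in W^{2,q}(\Omega)$ with $q > n$.

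Multiplying by $f_\infty$, integrating, and using the pointwise identity $f_\infty S(u_\infty) = e_\infty |f_\infty|$ (which follows from~\eqref{eq:Euler-Lagrange1}) gives
\[
\int_\Omega f_\infty S(u_\infty)\, dx = e_\infty \|f_\infty\|_{L^1(\Omega)}.
\]
The linear term $\int_\Omega f_\infty S_{u_\infty}'\phi\, dx$ vanishes for $\phi \in C_0^\infty(\Omega)$ by~\eqref{eq:weak-solution}; extending this to arbitrary $\phi \in \W_0^{2,\infty}(\Omega)$ is the main technical step and requires approximating $\phi$ by a sequence $\phi_j \in C_0^\infty(\Omega)$ such that $S_{u_\infty}'\phi_j \to S_{u_\infty}'\phi$ in a sense compatible with pairing against $L^1$. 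Weak-$\ast$ convergence of $A : D^2\phi_j$ in $L^\infty(\Omega)$ (leveraging the $L^\infty$-bound on $A : D^2\phi$ built into $\W^{2,\infty}(\Omega)$) together with $W^{2,q}$-convergence for large finite $q$ handles the lower-order terms $b_z \cdot D\phi + b_y\phi$, provided one is careful to preserve the vanishing boundary traces.

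Combining these ingredients with the trivial bound $\left|\int_\Omega f_\infty S(u_\infty + \phi)\,dx\right| \le \|f_\infty\|_{L^1} E_\infty(u_\infty + \phi)$ and dividing by $\|f_\infty\|_{L^1}$ yields
\[
e_\infty \le E_\infty(u_\infty + \phi) + C\|\phi\|_{W^{1,\infty}(\Omega)}^2.
\]
Since~\eqref{eq:Euler-Lagrange1} forces $|S(u_\infty)| = e_\infty$ wherever $f_\infty \ne 0$, the inequality $E_\infty(u_\infty) \ge e_\infty$ is immediate; the reverse inequality, needed to upgrade the previous estimate to the almost-minimiser bound with $M = C$, is where I expect the subtler work. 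One must rule out $|S(u_\infty)|$ being strictly larger than $e_\infty$ on the nodal set $\{f_\infty = 0\}$, presumably via unique continuation or a sign-preserving choice of $f_\infty$ combined with a strong positivity property for the uniformly elliptic adjoint $S_{u_\infty}^*$. Together with the density step for the adjoint equation, this is the crux of the argument.
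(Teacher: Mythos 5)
Your strategy is genuinely different from the paper's: you pair $S(u_\infty+\phi)$ with $f_\infty$, kill the linear term with \eqref{eq:Euler-Lagrange2}, extract $e_\infty\|f_\infty\|_{L^1(\Omega)}$ from \eqref{eq:Euler-Lagrange1}, and bound the pairing by $\|f_\infty\|_{L^1(\Omega)}E_\infty(u_\infty+\phi)$, whereas the paper never integrates the expansion against $f_\infty$ but instead argues pointwise with $(S(u_\infty+\phi))^2$, using the sign information from \eqref{eq:Euler-Lagrange1} on a set where $f_\infty\, S_{u_\infty}'\phi>0$. Your route is arguably cleaner, but the step you defer is a genuine gap, not a loose end: your chain gives $e_\infty\le E_\infty(u_\infty+\phi)+C\|\phi\|_{W^{1,\infty}(\Omega)}^2$, while \eqref{eq:Euler-Lagrange1} only yields $|S(u_\infty)|=e_\infty$ on $\{f_\infty\neq0\}$, so without ruling out $|S(u_\infty)|>e_\infty$ on the nodal set of $f_\infty$ you have not obtained the almost-minimiser inequality. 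Moreover, your suggested fix via unique continuation cannot be applied as stated, because a priori $f_\infty$ is only an $L^1$ distributional solution of a div-div form equation, and nodal-set or unique continuation theory is not available at that regularity. The paper closes exactly this point: Lemma \ref{lem:regularity} upgrades any such solution to $W^{1,q}_{\loc}(\Omega)$, the equation $S_{u_\infty}^*f_\infty=0$ is then rewritten in divergence form and bootstrapped to $W^{2,p}_{\loc}(\Omega)$ by standard elliptic regularity, and the Hardt--Simon results on nodal sets give $f_\infty\neq0$ almost everywhere; hence $|S(u_\infty)|=e_\infty$ a.e.\ and $E_\infty(u_\infty)=e_\infty$ (the case $e_\infty=0$ being trivial). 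Note that the paper's own proof also relies on this fact, so your argument, once this ingredient is supplied, uses no machinery beyond what the paper needs anyway.

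Two further remarks. First, the other step you flag --- testing \eqref{eq:Euler-Lagrange2} with $\phi\in\W_0^{2,\infty}(\Omega)$ rather than $C_0^\infty(\Omega)$ --- is handled in the paper by Lemma \ref{lem:distributional-solutions}, and your sketch (boundary-adapted mollification preserving the zero trace, weak-$*$ convergence of $A:D^2\phi_j$ in $L^\infty(\Omega)$ together with strong $W^{2,q}$ convergence for the lower-order terms) is essentially that construction, so it is correct in outline but would need to be written out. Second, your Taylor-remainder constant depends on the range of $(u_\infty+\tau\phi,\,Du_\infty+\tau D\phi)$, not merely on that of $(u_\infty,Du_\infty)$, so it is not uniform in $\phi$ as claimed; this is repaired, as in the paper, by first reducing to $\|\phi\|_{W^{1,\infty}(\Omega)}\le1$, since for $\|\phi\|_{W^{1,\infty}(\Omega)}>1$ the almost-minimiser inequality holds trivially with $M=E_\infty(u_\infty)$.
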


The converse is also true, provided that we impose some additional regularity
on $\partial \Omega$ and on the boundary data, and provided that we
restrict our attention to differential operators that permit certain
$L^p$-estimates.

\begin{definition} \label{def:admissible}
For a differential operator $S$ as in \eqref{eq:differential-operator},
we say that $S$ is \emph{admissible} if there exists $p_0 > 1$ such that
for any $p \ge p_0$, the following holds true. Suppose that
$u_0 \in \W^{2, \infty}(\Omega)$ and $\Lambda > 0$. Then there exists
$C > 0$ such that for any $u \in u_0 + \W_0^{2, p}(\Omega)$, if
\[
\|S(u)\|_{L^p(\Omega)} \le \Lambda,
\]
then
\[
\|u\|_{W^{2, p}(\Omega)} \le C.
\]
\end{definition}

Now we can formulate our second main result.

\begin{theorem} \label{thm:necessity}
Suppose that $\partial \Omega$ is of class $C^3$ and $u_0 \in C^2(\overline{\Omega})$.
Further suppose that $S$ is admissible.
If $u_\infty \in u_0 + \W_0^{2, \infty}(\Omega)$ is an almost-minimiser of $E_\infty$, then there exist $f_\infty \in L^1(\Omega) \setminus \{0\}$ and $e_\infty > 0$ such that \eqref{eq:Euler-Lagrange1} holds almost everywhere in $\Omega$ and \eqref{eq:Euler-Lagrange2} holds weakly.
\end{theorem}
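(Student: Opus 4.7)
The plan is to approximate by $L^p$-problems and pass to the limit $p \to \infty$. I would fix $q > n$ so that $W^{2, q}(\Omega) \hookrightarrow W^{1, \infty}(\Omega)$ continuously; the almost-minimiser hypothesis then yields a constant $M' > 0$ with
\begin{equation*}
E_\infty(u_\infty) \le E_\infty(v) + M' \|v - u_\infty\|_{W^{2, q}(\Omega)}^2
\end{equation*}
for every $v \in u_0 + \W_0^{2, \infty}(\Omega)$. For each $p \ge \max\{p_0, q\}$ I would consider the penalised functional
\begin{equation*}
G_p(v) = \|S(v)\|_{L^p(\Omega)} + M' \|v - u_\infty\|_{W^{2, q}(\Omega)}^2
\end{equation*}
on $u_0 + \W_0^{2, p}(\Omega)$. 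Admissibility supplies coercivity of the first term in $W^{2, p}$, both terms are weakly lower semicontinuous, and standard direct methods produce a minimiser $u_p$. Setting
\begin{equation*}
f_p := \frac{|S(u_p)|^{p - 2} S(u_p)}{\|S(u_p)\|_{L^p(\Omega)}^{p - 1}},
\end{equation*}
so that $\|f_p\|_{L^{p / (p - 1)}(\Omega)} = 1$, the first variation of $G_p$ produces the Euler-Lagrange identity
\begin{equation*}
\int_\Omega f_p \, S_{u_p}' \phi \, dx + 2 M' \mathcal{L}_p(\phi) = 0 \quad \text{for every } \phi \in C_0^\infty(\Omega),
\end{equation*}
where $\mathcal{L}_p(\phi) = \left.\tfrac{d}{dt}\|u_p + t\phi - u_\infty\|_{W^{2, q}(\Omega)}^2\right|_{t = 0}$.

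The next step is to pass to the limit. From $G_p(u_p) \le G_p(u_\infty) = \|S(u_\infty)\|_{L^p(\Omega)}$ together with $\|S(u_\infty)\|_{L^p(\Omega)} \to E_\infty(u_\infty)$, H\"older's inequality gives uniform $L^r$-bounds on $S(u_p)$ for every fixed $r < \infty$, and admissibility then gives $\|u_p\|_{W^{2, r}} \le C_r$. A diagonal argument extracts a subsequence with $u_p \rightharpoonup u_*$ weakly in every $W^{2, r}$ and strongly in $C^1(\overline{\Omega})$ for some $u_* \in u_0 + \W_0^{2, \infty}(\Omega)$. The penalty estimate $M' \|u_p - u_\infty\|_{W^{2, q}}^2 \le \|S(u_\infty)\|_{L^p} - \|S(u_p)\|_{L^p}$, combined with lower semicontinuity of $E_\infty$ along this convergence and the almost-minimiser inequality applied at $u_*$, forces $u_* = u_\infty$ and upgrades the convergence to strong in $W^{2, q}$. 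On the multiplier side, $(f_p)$ is bounded in $L^1$, so along a further subsequence it converges weakly-$*$ in the space of finite Radon measures on $\overline{\Omega}$ to some $f_\infty$; elliptic regularity for the equation $S_{u_p}^* f_p = O(u_p - u_\infty)$ (whose principal part is the uniformly elliptic operator $\div\div(f A)$, with $C^3$-regularity of $\partial\Omega$ giving boundary estimates) upgrades this to weak $L^1$-convergence, with $f_\infty \in L^1(\Omega)$. Passing to the limit in the Euler-Lagrange identity---the penalty $\mathcal{L}_p$ vanishes because $u_p \to u_\infty$ strongly in $W^{2, q}$, and the coefficients of $S_{u_p}'$ converge uniformly to those of $S_{u_\infty}'$ via $C^1$-convergence---yields \eqref{eq:Euler-Lagrange2}. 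The alignment identity \eqref{eq:Euler-Lagrange1} with $e_\infty = E_\infty(u_\infty)$ follows by passing to the limit in the pointwise identity $|f_p| S(u_p) = |S(u_p)| f_p$, combining the concentration of $|f_p|$ on the set $\{|S(u_p)| \approx \|S(u_p)\|_{L^\infty}\}$ with the strong $W^{2, q}$-convergence of $u_p$.

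The principal difficulty is the coupled requirement that $u_p \to u_\infty$ (rather than to some other almost-minimiser of $E_\infty$) while simultaneously $f_\infty \ne 0$. The quadratic $W^{2, q}$-penalty is designed to match the modulus in Definition~\ref{def:almost-minimiser}, so the equality case of the almost-minimiser inequality singles out $u_*$; in the absence of strict convexity of $E_\infty$, upgrading this to $u_* = u_\infty$ requires the bootstrap through the penalty estimate sketched above. Simultaneously, the normalisation of $f_p$ combined with elliptic regularity must prevent loss of $L^1$-mass either to the boundary or through singular concentration; this is where $C^3$-regularity of $\partial\Omega$ and the admissibility hypothesis play their roles. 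Strict positivity $e_\infty > 0$ reduces to $E_\infty(u_\infty) > 0$, which is implicit in the non-triviality of the setup when the boundary data are such that no admissible $v$ satisfies $S(v) \equiv 0$.
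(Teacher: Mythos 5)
Your overall strategy ($L^p$-approximation with a quadratic penalty matched to the modulus in Definition \ref{def:almost-minimiser}, normalised multipliers $f_p$, and a passage to the limit in the resulting Euler--Lagrange system) is the same as the paper's, but two steps you rely on do not go through as stated. First, taking the penalty weight equal to the constant $M'$ from the almost-minimiser inequality cannot force $u_* = u_\infty$: the chain of inequalities you describe only yields $E_\infty(u_*) + M'\|u_* - u_\infty\|_{W^{2,q}(\Omega)}^2 = E_\infty(u_\infty)$, which is exactly the equality case of the almost-minimiser inequality and is perfectly consistent with $u_* \neq u_\infty$, since $E_\infty$ is not strictly convex. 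The paper's device is to take the penalty coefficient $\sigma$ \emph{strictly} larger than the constant $\mu$ appearing in its version of the almost-minimiser inequality, so that the comparison gives $(\sigma - \mu)\|A : D^2(u_* - u_\infty)\|_{L^{p_0}(\Omega)}^2 \le 0$ and hence $u_* = u_\infty$. Without this, your limit equation lives at $u_*$ rather than at the given almost-minimiser, and your claim that $\mathcal{L}_p(\phi) \to 0$ also fails, because it needs $u_p \to u_\infty$ strongly and not merely $u_p \to u_*$.

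Second, and more seriously, the non-vanishing of $f_\infty$ is not established. Interior regularity (Lemma \ref{lem:regularity}) controls $f_p$ only on compact subsets of $\Omega$; it does not prevent the unit $L^{p/(p-1)}$-mass of $f_p$ from drifting entirely to $\partial\Omega$, and indeed the limit measure may charge the boundary --- the paper explicitly allows $F_\infty$ to have a part supported on $\partial\Omega$ and only shows that its interior density is nontrivial. Your sentence that elliptic regularity together with $C^3$-regularity of $\partial\Omega$ ``must prevent loss of $L^1$-mass to the boundary'' is therefore not an argument, and no boundary estimate for the measure-coefficient div-div equation is available to supply one. The paper's mechanism is different: it exploits the identity $|\Omega| e_p = \int_\Omega f_p S(u_p)\, dx = \int_\Omega f_p\bigl(S_{u_p}'u_p - g_p\bigr)\, dx$, replaces $u_p$ by a corrector $v$ with the same boundary data, constructed in Lemma \ref{lem:boundary_behaviour} so that $S_{u_\infty}'v - g_\infty$ is small near $\partial\Omega$ (this is where the $C^3$ boundary and $u_0 \in C^2(\overline{\Omega})$ are actually used), and thereby shows that the interior pairing of $f_p$ against a fixed bounded function stays at least $|\Omega|(e_\infty - \epsilon)$, which is what forces $f_\infty \not\equiv 0$. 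Relatedly, your derivation of \eqref{eq:Euler-Lagrange1} by ``concentration of $|f_p|$'' is unsubstantiated: $S(u_p)$ and $|f_p|$ converge only weakly, so one needs the local strong $L^{p_0'}$ convergence of $f_p$ (obtained in the paper from Lemma \ref{lem:regularity} applied to $h_p = f_p + 2\sigma|\Omega|\phi_p$) together with a dominated convergence argument for $|f_p|^{(p-2)/(p-1)}$; and the degenerate case $E_\infty(u_\infty) = 0$, which you dismiss as implicit non-triviality, is handled separately in the paper via the Fredholm alternative.
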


For admissible operators, we can furthermore be
certain that minimisers of $E_\infty$ exist for prescribed boundary data. A minimiser
is in particular an almost-minimiser, and thus we are guaranteed that
the system \eqref{eq:Euler-Lagrange1}, \eqref{eq:Euler-Lagrange2} has a
non-trivial solution.

\begin{proposition} \label{prp:minimisers}
If $S$ is admissible, then $E_\infty$ attains its minimum in
$u_0 + \W_0^{2, \infty}(\Omega)$ for any $u_0 \in \W^{2, \infty}(\Omega)$.
\end{proposition}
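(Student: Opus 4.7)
The plan is a standard $L^p$-approximation scheme combined with the admissibility hypothesis. For each $p \ge \max(p_0, n+1)$, consider the functional $E_p(u) = \|S(u)\|_{L^p(\Omega)}$ on the affine space $u_0 + W_0^{2, p}(\Omega)$. I first obtain a minimiser $u_p$ of $E_p$ by the direct method: any minimising sequence $(u_k)$ satisfies $\|S(u_k)\|_{L^p(\Omega)} \le E_p(u_0) + 1$ eventually, so admissibility yields a uniform bound in $W^{2, p}(\Omega)$. Passing to a weak subsequential limit $u_p$, the compact embedding $W^{2, p}(\Omega) \hookrightarrow\hookrightarrow C^1(\overline{\Omega})$ (valid since $p > n$ and $\partial \Omega$ is Lipschitz) gives uniform convergence $b(\blank, u_k, Du_k) \to b(\blank, u_p, Du_p)$; together with the weak $L^p$-convergence $A : D^2 u_k \rightharpoonup A : D^2 u_p$, this produces $S(u_k) \rightharpoonup S(u_p)$ weakly in $L^p(\Omega)$, and weak lower semicontinuity of $\|\blank\|_{L^p(\Omega)}$ closes the argument.

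The next step is to control $u_p$ uniformly as $p \to \infty$. Since $u_0$ is a competitor,
\[
\|S(u_p)\|_{L^p(\Omega)} \le \|S(u_0)\|_{L^p(\Omega)} \le |\Omega|^{1/p} E_\infty(u_0),
\]
and for any fixed $q \ge \max(p_0, n+1)$ and any $p \ge q$, H\"older's inequality upgrades this to $\|S(u_p)\|_{L^q(\Omega)} \le |\Omega|^{1/q} E_\infty(u_0)$. Since $u_p - u_0 \in W_0^{2, p}(\Omega) \subseteq W_0^{2, q}(\Omega)$, admissibility at level $q$ gives $\|u_p\|_{W^{2, q}(\Omega)} \le C(q)$ uniformly in $p$. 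A diagonal extraction over $q_k \to \infty$ then produces $p_j \to \infty$ and a function $u_\infty \in \bigcap_{q < \infty} W^{2, q}(\Omega)$ with $u_{p_j} \rightharpoonup u_\infty$ weakly in $W^{2, q}(\Omega)$ for every finite $q$. Weak closedness of $u_0 + W_0^{2, q}(\Omega)$ forces $u_\infty - u_0 \in W_0^{2, q}(\Omega)$ for every $q$, and in particular $u_\infty - u_0 \in W_0^{2, 2}(\Omega)$.

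Finally, I identify $u_\infty$ as an $E_\infty$-minimiser. Repeating the weak-convergence analysis of the first step yields $S(u_{p_j}) \rightharpoonup S(u_\infty)$ in $L^q(\Omega)$ for every $q$. For any competitor $v \in u_0 + \W_0^{2, \infty}(\Omega)$ (which lies in $u_0 + W_0^{2, p_j}(\Omega)$ since $\W_0^{2, \infty}(\Omega) \subseteq W_0^{2, p_j}(\Omega)$ for $p_j \ge 2$), combining the minimising property of $u_{p_j}$ with H\"older produces
\[
\|S(u_\infty)\|_{L^q(\Omega)} \le \liminf_{j \to \infty} |\Omega|^{1/q - 1/p_j} \|S(v)\|_{L^{p_j}(\Omega)} \le |\Omega|^{1/q} E_\infty(v).
\]
Choosing $v = u_0$ bounds $\|S(u_\infty)\|_{L^q(\Omega)}$ uniformly in $q$; together with $b(\blank, u_\infty, Du_\infty) \in L^\infty(\Omega)$ (which follows by Sobolev embedding from $u_\infty \in W^{2, q}(\Omega)$ for $q > n$), this yields $A : D^2 u_\infty \in L^\infty(\Omega)$ and hence $u_\infty \in u_0 + \W_0^{2, \infty}(\Omega)$. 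Sending $q \to \infty$ in the displayed inequality now gives $E_\infty(u_\infty) \le E_\infty(v)$ for every competitor $v$.

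The main obstacle is handling the semilinear term $b(\blank, u_p, Du_p)$ uniformly throughout the approximation procedure. Admissibility is the key mechanism to transfer $L^p$-control of $S(u)$ back to $W^{2, p}$-control of $u$, but it must be invoked at a fixed exponent $q$ while a separate exponent $p$ runs to infinity; the compact Sobolev embedding $W^{2, q}(\Omega) \hookrightarrow\hookrightarrow C^1(\overline{\Omega})$ for $q > n$ is what converts the resulting weak $W^{2, q}$-convergence into uniform $C^1$-convergence, so that the $C^2$-regularity of $b$ suffices to pass to the limit.
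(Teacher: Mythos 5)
Your argument is correct, but it takes a genuinely different route from the paper. The paper proves Proposition \ref{prp:minimisers} by applying the direct method to $E_\infty$ itself: for a minimising sequence $(u_k)$ the quantity $\|S(u_k)\|_{L^\infty(\Omega)}$ is bounded by definition, so admissibility immediately gives bounds in $W^{2,p}(\Omega)$ for every $p \ge p_0$; then weak $W^{2,p}$-convergence plus Arzel\`a--Ascoli gives $C^1$-convergence and uniform convergence of the $b$-term, $S(u_k) \stackrel{*}{\rightharpoonup} S(u_\infty)$ weakly* in $L^\infty(\Omega)$, and weak* lower semicontinuity of the $L^\infty$-norm finishes the proof in a few lines. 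You instead run an $L^p$-approximation scheme: minimise $E_p$ for each finite $p$, derive bounds uniform in $p$ by invoking admissibility at a fixed exponent $q$ via H\"older, extract a diagonal limit, and recover $E_\infty$-minimality by sending $q \to \infty$ in $\|S(u_\infty)\|_{L^q} \le |\Omega|^{1/q}E_\infty(v)$, which replaces the weak* lower semicontinuity step. This is essentially the machinery the paper deploys later, in the proof of Theorem \ref{thm:necessity} (there with an extra penalisation term), so your proof is more involved than needed for bare existence, but it buys you the approximating $p$-minimisers $u_p$, which are exactly what one needs to extract the Euler--Lagrange system \eqref{eq:Euler-Lagrange1}--\eqref{eq:Euler-Lagrange2}; the paper's direct argument is shorter because the $L^\infty$-bound on a minimising sequence makes the two-exponent bookkeeping unnecessary. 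One small point to flag in your write-up: both your step comparing $u_{p_j}$ with competitors $v \in u_0 + \W_0^{2,\infty}(\Omega)$ and your conclusion $u_\infty \in u_0 + \W_0^{2,\infty}(\Omega)$ rely on the inclusion $W^{2,p}(\Omega) \cap W_0^{2,2}(\Omega) \subseteq W_0^{2,p}(\Omega)$ on a Lipschitz domain (so that $\W_0^{2,\infty}(\Omega) \subseteq W_0^{2,p}(\Omega)$); this is true and is also used tacitly by the paper in \eqref{eqn:monotonicity}, but it deserves a sentence of justification.
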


The proof relies on standard arguments and in particular on the
direct method. For completeness, we outline
these arguments anyway.

\begin{proof}
Let $(u_k)_{k \in \N}$ be a minimising sequence. Then $\|S(u_k)\|_{L^\infty(\Omega)}$ is obviously bounded. Hence if $p_0$ is the number from Definition \ref{def:admissible}, then
it follows that $(u_k)_{k \in \N}$ is bounded in $W^{2, p}(\Omega)$ for any $p \in [p_0, \infty)$.
Therefore, we may assume (extracting a subsequence if necessary) that we have the convergence
$u_k \rightharpoonup u_\infty$ weakly in $W^{2, p}(\Omega)$ for every $p < \infty$. Moreover, the limit belongs to $u_0 + W_0^{2, p}(\Omega)$.

Using the Sobolev embedding theorem, we further conclude that $(u_k)_{k \in \N}$ is bounded in $C^{1, \alpha}(\overline{\Omega})$ as well for every $\alpha \in (0, 1)$. The Arzel\`a-Ascoli theorem implies that $u_k \to u_\infty$ in $C^1(\overline{\Omega})$. Hence $b(x, u_k, Du_k) \to b(x, u_\infty, Du_\infty)$ uniformly.

Since $(S(u_k))_{k \in \N}$ is bounded in $L^\infty(\Omega)$, we may assume that
$S(u_k) \stackrel{*}{\rightharpoonup} \sigma$ weakly* in $L^\infty(\Omega)$ for some $\sigma \in L^\infty(\Omega)$. Using the above convergence, we conclude that $\sigma = S(u_\infty)$. Now it follows that
\[
E_\infty(u_\infty) \le \liminf_{k \to \infty} E_\infty(u_k),
\]
and thus $u_\infty$ is a minimiser.
\end{proof}

Summarising Theorem \ref{thm:sufficiency} and Theorem \ref{thm:necessity},
we can say that the system comprising equations \eqref{eq:Euler-Lagrange1}
and \eqref{eq:Euler-Lagrange2} is \emph{equivalent} to the almost-minimising
condition under certain assumptions. If we accept that the latter is a
reasonable substitute for critical points, then we may think of
\eqref{eq:Euler-Lagrange1} and \eqref{eq:Euler-Lagrange2} as a substitute
for an Euler-Lagrange equation.

For a variety of other variational problems in $L^\infty$, a corresponding differential equation
has been identified by quite different methods. In the case of the
optimal Lipschitz extension problem, the result is the Aronsson equation
\cite{Aronsson:68}.
Formally, Aronsson's calculations can be carried out for the above problem
as well. They give rise to the equation $S(u) D(S(u)) = 0$.
The connections between this equation and the variational problem are not explored in this work,
but we observe that the former is satisfied by solutions of \eqref{eq:Euler-Lagrange1}.

The `Aronsson equation' $S(u)D(S(u))=0$ is of third order, quasilinear, and in non-divergence form. It is
not elliptic in any reasonable sense. It allows neither weak nor viscosity solutions, but
there is another approach that does apply and has produced some results on equations such as this
(see, e.g., the papers of the first author and coauthors \cite{Katzourakis:17.2, Croce-Katzourakis-Pisante:17, Katzourakis-Pryer:20}).
In this paper, however, we follow the alternative approach outlined above and consider solutions to the system
\eqref{eq:Euler-Lagrange1}, \eqref{eq:Euler-Lagrange2} instead. This can be seen as a divergence-form
(or div-div form) alternative to the Aronsson equation.

The comparison with other variational problems in $L^\infty$ makes
another aspect of the above results remarkable. We note that the system
\eqref{eq:Euler-Lagrange1}, \eqref{eq:Euler-Lagrange2} is local in the
sense that if it holds in $\Omega$, then it is automatically satisfied
in any open subdomain $\Omega' \subseteq \Omega$. Under the conditions of Theorem \ref{thm:necessity},
it follows that the almost-minimising condition is also local in
a similar sense. Indeed, if $\partial \Omega$ is of class $C^3$ and $u_0 \in C^2(\overline{\Omega})$,
then for any almost-minimiser $u_\infty \in u_0 + \W_0^{2, \infty}(\Omega)$, Theorem \ref{thm:necessity}
applies. Given a Lipschitz subdomain $\Omega' \subseteq \Omega$,
we can then use Theorem \ref{thm:sufficiency} in $\Omega'$,
concluding that $u_\infty$ is an almost-minimiser in $\Omega'$ as well.
This is in stark contrast to the optimal Lipschitz extension
problem and similar variational problems, where locality must be imposed
in order to obtain solutions with good properties. These solutions are
then known as absolute minimisers.

For the special case $S(u) = \Delta u$ (and for certain other problems),
a previous paper \cite{Katzourakis-Moser:19} gives stronger results.
It is shown that solutions of \eqref{eq:Euler-Lagrange1}, \eqref{eq:Euler-Lagrange2} are
unique under the boundary condition $u_\infty \in u_0 + \W^{2, \infty}(\Omega)$
and correspond to unique minimisers of $E_\infty$.
For similar problems involving nonlinear operators, however, we do not have uniqueness in general \cite{Moser:19}.
For semilinear operators as in this paper, the question is open.

In the next section, we discuss second order elliptic, linear equations
in div-div form, of which \eqref{eq:Euler-Lagrange2} is an example.
We consider solutions in $L^1(\Omega)$ and derive some interior regularity
that we will need for the proofs of our main results. We will also show that
weak solutions can be tested with functions in $\W_0^{2, \infty}(\Omega)$.
We then prove Theorem \ref{thm:sufficiency} and Theorem \ref{thm:necessity}
in the following two sections. In the final section, we discuss some specific
differential operators of the form $S(u) = \Delta u + g(u)$ for a function
$g \colon \R \to \R$. In particular, we give some conditions that imply
that the operator is admissible in the sense of Definition \ref{def:admissible}.
The purpose of this section is not just to show that Theorem
\ref{thm:necessity} is not vacuous, but also to give an idea of the
nonlinearities allowed.

\section{Elliptic equations in div-div form}

In this section, we prove some properties of weak
solutions of an equation of the form
\begin{equation} \label{eq:div-div}
\div \div (fA) + \div(fB) + fc = \div G + g,
\end{equation}
where $f$ is assumed to be in $L^1(\Omega)$ or even a Radon measure on $\Omega$.
The matrix $A$ will be the same as in the introduction and will be fixed throughout.
The properties of the coefficients $B$ and $c$ are described below.
We eventually apply these results to equations such as
$S_u^* f = \div G + g$ for some $u \in W^{2, p}(\Omega)$, or even to
$S_{u_\infty}^* f = 0$ for a function
$u_\infty \in \W^{2, \infty}(\Omega)$, but we formulate them more generally here.

First we prove some interior regularity for weak solutions
of the equation. We duplicate some results from a more general theory
here (see, e.g., the survey article of Bogachev, Krylov, and R\"ockner
\cite{Bogachev-Krylov-Rockner:09}). In order to make the paper self-contained,
we include a proof nevertheless.

\begin{lemma} \label{lem:regularity}
For any $\Omega' \Subset \Omega$ and any $p \in (n, \infty)$, there exists a
constant $C > 0$ such that the following holds true.
Let $p' = \frac{p}{p - 1}$ be the exponent conjugate to $p$.
Suppose that $B \in L^\infty(\Omega; \R^n)$, $c \in L^\infty(\Omega)$, $g \in L^1(\Omega)$,
and $G \in L^{p'}(\Omega; \R^n)$. Set
\[
\Gamma = \|B\|_{L^\infty(\Omega)} + \|c\|_{L^\infty(\Omega)} + 1.
\]
Suppose that $\mu \in (C_0^0(\Omega))^*$ is a distributional solution of
\[
\div \div (A\mu) + \div(B\mu) + c\mu = \div G + g,
\]
meaning that
\begin{equation} \label{eq:div-div-distributional}
\int_\Omega (A : D^2\phi - B \cdot D\phi + c\phi) \, d\mu = \int_\Omega (g\phi - G \cdot D\phi) \, dx
\end{equation}
for every $\phi \in C_0^\infty(\Omega)$. Then $\mu$ is absolutely continuous
with respect to the Lebesgue measure. Its Radon-Nikodym derivative
$f$ belongs to $W_\loc^{1, p'}(\Omega)$ and satisfies
\[
\|f\|_{W^{1, p'}(\Omega')} \le C\Gamma^2 \left(|\mu|(\Omega) + \|G\|_{L^{p'}(\Omega)} + \|g\|_{L^1(\Omega)}\right).
\]
\end{lemma}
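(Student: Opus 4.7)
The plan is a duality argument built on the Calder\'on--Zygmund $L^p$-theory for the principal operator $A : D^2 \cdot$, carried out in two stages: first I would upgrade the Radon measure $\mu$ to an $L^{p'}_{\loc}$-function, then I would bootstrap to $W^{1, p'}_{\loc}$ via standard elliptic regularity for a divergence-form equation.

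For stage one, fix $\Omega' \Subset \Omega$ and interpose open sets $\Omega' \Subset U \Subset V \Subset \Omega$ together with a cutoff $\eta \in C_0^\infty(V)$ equal to $1$ on $U$. For each $h \in C_0^\infty(U)$, I would solve the auxiliary Dirichlet problem $A : D^2 \phi = h$ on a smooth domain containing $V$ with zero boundary values; classical Calder\'on--Zygmund theory produces $\phi \in W^{2, p} \cap W^{1, p}_0$ with $\|\phi\|_{W^{2, p}} \le C\|h\|_{L^p}$, and since $p > n$, Sobolev embedding upgrades this to $\|\phi\|_{W^{1, \infty}} \le C\|h\|_{L^p}$. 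The product $\tilde\phi = \eta\phi$ is a compactly supported $W^{2, p}$ function whose Hessian satisfies
\[
A : D^2 \tilde\phi = \eta h + 2(A D\phi) \cdot D\eta + \phi (A : D^2 \eta),
\]
each summand on the right being continuous (the first by the choice of $h$, the others by Sobolev embedding since $p > n$), so by mollification $\tilde\phi$ is admissible in \eqref{eq:div-div-distributional}. Since $\eta \equiv 1$ on $\supp h$, rearranging and estimating each term by $\|\tilde\phi\|_{W^{1, \infty}} \le C\|h\|_{L^p}$ yields
\[
\Bigl|\int_\Omega h\,d\mu\Bigr| \le C\Gamma\bigl(|\mu|(\Omega) + \|G\|_{L^{p'}(\Omega)} + \|g\|_{L^1(\Omega)}\bigr)\|h\|_{L^p(U)}.
\]
Density of $C_0^\infty(U)$ in $L^p(U)$ identifies $\mu$ on $U$ with an $L^{p'}$-function $f$ satisfying the corresponding estimate.

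For stage two, having $f \in L^{p'}_{\loc}$, a formal integration by parts shows that $f$ distributionally satisfies
\[
\div\bigl[A\nabla f + (\div A + B)f - G\bigr] = g - c f,
\]
whose right-hand side lies in $W^{-1, p'}_{\loc}$: the term $cf \in L^{p'}$ is trivially there, and the $L^1$-datum $g$ belongs to $W^{-1, p'}$ precisely because $p > n$, dually because $W^{1, p}_0$ embeds into $L^\infty$ on bounded domains. Interior $W^{1, p'}$-regularity for the divergence-form elliptic operator $\div(A\nabla \cdot)$ with $C^2$ coefficients then gives $f \in W^{1, p'}_{\loc}$, and combining with the stage-one estimate produces the advertised bound; the lower-order coefficients $B$ and $c$ contribute a second factor of $\Gamma$ in this step (through the $\Gamma \|f\|_{L^{p'}}$ term in the elliptic estimate), accounting for the $\Gamma^2$ in the final constant.

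The main obstacle is really technical: justifying the use of $\tilde\phi \in W^{2, p}$ as a test function against a mere Radon measure requires continuity of $A : D^2 \tilde\phi$, which is ensured by taking $h$ smooth and exploiting $p > n$, together with careful mollification to reduce to the $C_0^\infty$ case. A secondary point is that $B$ and $c$ are only bounded, not differentiable, so one must keep them inside the divergence throughout the $W^{1, p'}$ bootstrap rather than differentiating them via a product rule.
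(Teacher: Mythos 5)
Your first stage is essentially the paper's own argument: solve $A:D^2\phi=h$ by $L^p$ (Calder\'on--Zygmund) theory, use $p>n$ to get $C^1$ control of the test function, insert a cut-off version into \eqref{eq:div-div-distributional}, and conclude by duality that $\mu$ has a density $f\in L^{p'}_\loc$ with the stated bound. This part is sound, and the admissibility of the $W^{2,p}$ test function can indeed be justified by mollification because $p>n$ makes the commutator with the $C^2$ matrix $A$ vanish uniformly.

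The second stage, however, has a genuine gap. Rewriting the equation as $\div\bigl(A\nabla f+(\div A+B)f-G\bigr)=g-cf$ is legitimate distributionally, and your observation that $g\in L^1\subseteq W^{-1,p'}$ because $W_0^{1,p}\hookrightarrow L^\infty$ for $p>n$ is correct. But at this point $f$ is known only to lie in $L^{p'}_\loc$, so the equation holds merely in the sense of distributions tested against $C_0^\infty$: $f$ is a \emph{very weak} (transposition) solution, not a weak solution in any $W^{1,q}_\loc$. The ``interior $W^{1,p'}$-regularity for $\div(A\nabla\cdot)$'' you invoke is, in its standard form (Caccioppoli or difference-quotient arguments, or the usual $W^{1,p}$ interior estimates), a statement about solutions that already lie in a Sobolev space, and it cannot be applied to $f$ directly --- you cannot test the equation with anything built from $f$ itself. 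The upgrade from an $L^{p'}$ distributional solution to $W^{1,p'}_\loc$ is true, but its proof is itself a duality argument: for $\psi\in C_0^1(\Omega)$ one solves the auxiliary problem $\div(AD\phi)=\div(\psi e_i)$, equivalently $A:D^2\phi+(\div A)\cdot D\phi=\psi_{x_i}$, with the bound $\|\phi\|_{W^{1,p}}\le C\|\psi\|_{L^p}$, and uses a cut-off multiple of $\phi$ as a test function in \eqref{eq:div-div-distributional}; after substitution only $\phi$ and $D\phi$, never $D^2\phi$, appear, which is why $W^{1,p}$ control suffices. This is exactly how the paper produces the weak derivatives of $f$ in $L^{p'}$ (and where the second factor of $\Gamma$ arises). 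So either carry out this second duality step explicitly, or replace the appeal to ``standard interior regularity'' by a citation covering very weak solutions or elliptic equations for measures, e.g.\ \cite{Bogachev-Krylov-Rockner:09}. Your closing caveat about not differentiating $B$ and $c$ is correct but is not the main obstruction; the low a priori regularity of $f$ is.
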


\begin{proof}
Consider a function $\chi \in C_0^\infty(\Omega)$. Define a functional
$\alpha \in (C_0^1(\Omega))^*$ by
\[
\alpha(\psi) = \int_\Omega \chi \psi \, d\mu, \quad \psi \in C_0^1(\Omega).
\]
(This means that $\alpha$ corresponds to the measure $\chi \mu$, but we regard it as a functional
on $C_0^1(\Omega)$ at first.)
Choose an open, precompact set $\Omega'' \Subset \Omega$ with smooth boundary and with $\supp \chi \subseteq \Omega''$.
Given $\psi \in C_0^1(\Omega)$, we can solve the equation
\[
A: D^2 \phi = \psi
\]
in $W^{2, p}(\Omega'') \cap W_0^{1, p}(\Omega'')$ by \cite[Theorem 9.15]{Gilbarg-Trudinger:83}. Moreover,
\cite[Theorem 9.19]{Gilbarg-Trudinger:83} implies that
$\phi \in C^2(\Omega'')$. By approximation, we then see that \eqref{eq:div-div-distributional}
is satisfied for the test function $\chi \phi$. Thus we obtain
\[
\begin{split}
\alpha(\psi) & = \int_\Omega \chi A: D^2\phi \, d\mu \\
& = \int_\Omega \bigl(\chi B \cdot D\phi + \phi B \cdot D\chi - c \chi\phi - A : (2D\chi \otimes D\phi + \phi D^2 \chi)\bigr) \, d\mu \\
& \quad + \int_\Omega (\chi g\phi - \chi G \cdot D\phi - \phi G \cdot D\chi) \, dx.
\end{split}
\]

Note that \cite[Lemma 9.17]{Gilbarg-Trudinger:83} implies that
\[
\|\phi\|_{W^{2, p}(\Omega'')} \le C_1 \|\psi\|_{L^p(\Omega)}
\]
for a constant $C_1 = C_1(n, \Omega'', p, A)$. Hence
\[
\|\phi\|_{C^1(\overline{\Omega''})} \le C_2 \|\psi\|_{L^p(\Omega)}
\]
for a constant $C_2$ with the same dependence.
Therefore, we find a constant $C_3 = C_3(n, \Omega'', p, A, \chi)$ such that
\[
|\alpha(\psi)| \le C_3 \left(\Gamma |\mu|(\Omega) + \|g\|_{L^1(\Omega)} + \|G\|_{L^{p'}(\Omega)}\right) \|\psi\|_{L^p(\Omega)}.
\]
In particular, the functional $\alpha$ has a continuous linear extension to $L^p(\Omega)$.
It follows that there exists $\tilde{f} \in L^{p'}(\Omega)$ such that
\[
\int_\Omega \chi \psi \, d\mu = \int_\Omega \tilde{f} \psi \, dx
\]
for all $\psi \in C_0^1(\Omega)$. Since $C_0^1(\Omega)$ is dense in $C_0^0(\Omega)$, this means that $\chi \mu$ is absolutely continuous with respect
to the Lebesgue measure and $\tilde{f}$ is the Radon-Nikodym derivative. Since
these arguments work for any $\chi \in C_0^\infty(\Omega)$,
the measure $\mu$ is absolutely continuous as well and has a
Radon-Nikodym derivative $f \in L_\loc^{p'}(\Omega)$.
Choosing $\chi$ such that $\chi \equiv 1$ in $\Omega'$, we also obtain the inequality
\begin{equation} \label{eq:L^p-estimate1}
\|f\|_{L^{p'}(\Omega')} \le C_3 \left(\Gamma |\mu|(\Omega) + \|G\|_{L^{p'}(\Omega)} + \|g\|_{L^1(\Omega)}\right).
\end{equation}

We now conclude that \eqref{eq:div-div-distributional} holds
true for every $\phi \in W^{2, p}(\Omega)$ with compact support in $\Omega$.

For $i \in \{1, \dotsc, n\}$, we next consider the functional $\beta_i \in (C_0^1(\Omega))^*$ with
\[
\beta_i(\psi) = \int_\Omega f \chi \psi_{x_i} \, dx, \quad \psi \in C_0^1(\Omega)
\]
(corresponding to a distributional derivative of $\chi f$).
Given a fixed $\psi \in C_0^1(\Omega)$, solve
\[
A : D^2\phi + (\div A) \cdot D\phi = \psi_{x_i}
\]
in $W^{2, p}(\Omega'') \cap W_0^{1, p}(\Omega'')$. If we write $e_i$ for
the $i$-th standard unit vector in $\R^n$, then the equation can alternatively
be represented in the form
\[
\div(A D\phi) = \div(\psi e_i).
\]
Standard $L^p$-estimates for weak solutions thus give the estimate
\[
\|\phi\|_{W^{1, p}(\Omega'')} \le C_4 \|\psi\|_{L^p(\Omega)}
\]
for a constant $C_4 = C_4(n, \Omega'', p, A)$.
The function $\chi\phi$ is again a suitable test function for
\eqref{eq:div-div-distributional}. We obtain
\[
\begin{split}
\beta_i(\psi) & = \int_\Omega f \bigl(\chi (\div A) \cdot D\phi - A : (2D\chi \otimes D\phi + \phi D^2 \chi)\bigr) \, dx \\
& \quad + \int_\Omega f \bigl(B \cdot (\chi D\phi + \phi D\chi) - c \chi\phi\bigr) \, dx \\
& \quad + \int_\Omega (\chi g \phi - \chi G \cdot D\phi - \phi G \cdot D\chi) \, dx.
\end{split}
\]
Hence there exists a constant $C_5 = C_5(n, \Omega'', p, A, \chi)$ such that
\[
|\beta_i(\psi)| \le C_5 \left(\Gamma \|f\|_{L^{p'}(\Omega'')} + \|G\|_{L^{p'}(\Omega)} + \|g\|_{L^1(\Omega)}\right) \|\psi\|_{L^p(\Omega)}.
\]
Therefore, there exists $h_i \in L^{p'}(\Omega)$ such that
\[
\int_\Omega \chi f \psi_{x_i} \, dx = \int_\Omega h_i \psi \, dx
\]
for all $\psi \in C_0^1(\Omega)$.
This is true for $i = 1, \dots, n$, so the function $\chi f$ has weak derivatives in $L^{p'}(\Omega)$, which satisfy
\begin{equation} \label{eq:L^p-estimate2}
\left\|(\chi f)_{x_i}\right\|_{L^{p'}(\Omega)} \le C_5 \left(\Gamma \|f\|_{L^{p'}(\Omega'')} + \|G\|_{L^{p'}(\Omega)} + \|g\|_{L^1(\Omega)}\right).
\end{equation}
Since $\chi \in C_0^\infty(\Omega)$ can be chosen arbitrarily,
it follows that $f \in W_\loc^{1, p'}(\Omega)$. Moreover, we obtain
the desired inequality if we combine \eqref{eq:L^p-estimate1} with
\eqref{eq:L^p-estimate2}.
\end{proof}

We will also require the following statement, which says that a weak
solution of an equation of the form
\begin{equation} \label{eq:div-div-homogeneous}
\div \div (fA) + \div(fB) + cf = 0
\end{equation}
can be tested with functions from the space $\W_0^{2, \infty}(\Omega)$.

\begin{lemma} \label{lem:distributional-solutions}
Suppose that $f \in L^1(\Omega)$ is a weak solution of equation
\eqref{eq:div-div-homogeneous}. Then
\[
\int_\Omega f(A : D^2\phi - B \cdot D\phi + c\phi) \, dx = 0
\]
for all $\phi \in \W_0^{2, \infty}(\Omega)$.
\end{lemma}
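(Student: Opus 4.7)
The plan is to prove the lemma by a density argument. Given $\phi\in\W_0^{2,\infty}(\Omega)$, I would construct a sequence $(\phi_k)\subset C_0^\infty(\Omega)$ such that $\phi_k\to\phi$ and $D\phi_k\to D\phi$ uniformly on $\overline\Omega$, while the sequence $(A:D^2\phi_k)_k$ is uniformly bounded in $L^\infty(\Omega)$ and converges to $A:D^2\phi$ almost everywhere. Given such a sequence, since $f\in L^1(\Omega)$, dominated convergence yields
\[
0=\int_\Omega f(A:D^2\phi_k-B\cdot D\phi_k+c\phi_k)\,dx\longrightarrow\int_\Omega f(A:D^2\phi-B\cdot D\phi+c\phi)\,dx,
\]
the left-hand side vanishing for every $k$ by the weak-solution hypothesis, yielding the desired identity.

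For the construction I would use a partition of unity on $\overline\Omega$ to reduce to the case where $\phi$ is supported in a single local chart in which, after flattening, the boundary $\partial\Omega$ is identified with $\{x_n=0\}$ and $\Omega$ with $\{x_n>0\}$. Since $\W_0^{2,\infty}(\Omega)\subset W_0^{2,q}(\Omega)$ for every $q<\infty$, the zero extension $\tilde\phi$ of $\phi$ lies in $W^{2,q}(\R^n)$ for every $q$; extending $A$ to a $C^2$ function $\tilde A$ on $\R^n$, one also has $\tilde A:D^2\tilde\phi\in L^\infty(\R^n)$. Now set
\[
\phi_k(x)=\bigl(\tilde\phi(\,\cdot\,-h_ke_n)*\rho_{\epsilon_k}\bigr)(x),
\]
with $h_k\downarrow 0$ and $\epsilon_k<h_k/2$, so that $\supp\phi_k\Subset\Omega$. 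Uniform convergence of $\phi_k$ and $D\phi_k$ is routine, using that by Sobolev embedding $\phi\in C^{1,\alpha}(\overline\Omega)$ for every $\alpha<1$ with $\phi=D\phi=0$ on $\partial\Omega$, which makes $\tilde\phi$ and $D\tilde\phi$ uniformly continuous on $\R^n$.

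The main obstacle is establishing the uniform $L^\infty$ bound on $A:D^2\phi_k$. The difficulty is that $A$ depends on $x$, so $A(x):D^2\phi_k(x)$ is not simply a mollification of the bounded function $A:D^2\phi$; rather one has the decomposition
\[
A(x):D^2\phi_k(x)=\bigl[(\tilde A:D^2\tilde\phi)*\rho_{\epsilon_k}\bigr](x-h_ke_n)+R_k(x),
\]
with $R_k(x)=\int[A(x)-\tilde A(x-h_ke_n-y)]:D^2\tilde\phi(x-h_ke_n-y)\,\rho_{\epsilon_k}(y)\,dy$. The leading term is bounded in $L^\infty$ by $\|\tilde A:D^2\tilde\phi\|_{L^\infty(\R^n)}$ uniformly in $k$. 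For $R_k$, Lipschitz continuity of $\tilde A$ yields $|A(x)-\tilde A(x-h_ke_n-y)|\le C(h_k+\epsilon_k)$ for $|y|\le\epsilon_k$, while Young's inequality bounds $\||D^2\tilde\phi|*\rho_{\epsilon_k}\|_{L^\infty(\R^n)}\le C\|D^2\tilde\phi\|_{L^q(\R^n)}\epsilon_k^{-n/q}$ for any $q>1$. Choosing $q>n$ and taking $\epsilon_k$ sufficiently small relative to $h_k$ (for instance $h_k=2\epsilon_k$) forces $\|R_k\|_{L^\infty}\to 0$, completing the uniform bound. Notably, the interior regularity of $f$ from Lemma~\ref{lem:regularity} is not needed in this argument; dominated convergence with only $f\in L^1(\Omega)$ suffices once the approximation is in hand.
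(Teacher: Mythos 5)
Your overall strategy is essentially the one the paper uses: localise with a partition of unity, push the zero extension of $\phi$ inward, mollify at a smaller scale so the approximations lie in $C_0^\infty(\Omega)$, prove a uniform $L^\infty$ bound on $A:D^2\phi_k$ via a commutator decomposition, and pass to the limit against $f\in L^1(\Omega)$. Two execution details differ and are fine as variants. First, the commutator: the paper integrates by parts once, trading $D^2\phi$ for $D\phi\in L^\infty$ against $D\eta_{\ell\epsilon}$ and $\div A$, and only obtains a uniform bound on the error; you instead estimate it directly by $C(h_k+\epsilon_k)\,\epsilon_k^{-n/q}\|D^2\tilde{\phi}\|_{L^q}$ with $q>n$ and $h_k\sim\epsilon_k$, which even gives $R_k\to0$ uniformly. (Note you only need that the zero extension lies in $W^{2,q}(\R^n)$, which follows from $\phi\in W_0^{2,2}(\Omega)\cap W^{2,q}(\Omega)$.) Second, the limit passage: the paper identifies the weak* limit of $A:D^2\phi_\epsilon$ in $L^\infty(\Omega)$ and pairs with $f\in L^1$, while you invoke dominated convergence; for that you still have to justify a.e.\ convergence of $A:D^2\phi_k$, e.g.\ by noting $D^2\phi_k\to D^2\phi$ in $L^q(\Omega)$ and passing to a subsequence (the limiting value of the integral is then independent of the subsequence), which is a harmless addition.

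The one step that does not work as written is the boundary flattening. The lemma is stated for a bounded \emph{Lipschitz} domain, and a Lipschitz boundary can only be flattened by a bi-Lipschitz chart. Such a change of variables does not preserve the data your argument relies on: composing $\phi$ with the chart brings in second derivatives of the merely Lipschitz graph function, so the transformed function need not belong to $W^{2,q}$, and the transformed coefficient matrix is no longer $C^2$ (not even Lipschitz), which is exactly what your bound $|A(x)-\tilde{A}(x-h_ke_n-y)|\le C(h_k+\epsilon_k)$ uses; the identity $\tilde{A}:D^2\tilde{\phi}\in L^\infty$ is likewise tied to the original coordinates. The repair is to avoid changing variables: in a chart where $\Omega$ is locally the supergraph $\{x_n>\gamma(x')\}$ of a Lipschitz function $\gamma$, translate by $h_k e_n$ and mollify with $\epsilon_k\le h_k/C$, where $C$ depends on the Lipschitz constant of $\gamma$ (your threshold $\epsilon_k<h_k/2$ is only correct for an exact half-space). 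This is precisely the paper's device of mollifiers supported in cones $C_\ell\cap B_\epsilon(0)$ chosen so that $x-C_\ell$ is exterior to $\overline{\Omega}$ near $\partial\Omega\cap G_\ell$; with that replacement, and the minor point about a.e.\ convergence above, your argument goes through.
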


\begin{proof}
Given $\phi \in \W_0^{2, \infty}(\Omega)$, we first construct a family of
approximations $(\phi_\epsilon)_{\epsilon \in (0, \epsilon_0]}$ in
$C_0^\infty(\Omega)$ such that $\phi_\epsilon \to \phi$ in $W^{2, q}(\Omega)$
for any $q < \infty$ and, at the same time, such that $A : D^2\phi_\epsilon$
remains bounded in $L^\infty(\Omega)$.

For this purpose, we extend $\phi$ by $0$ outside of $\Omega$.
Choose a finite open cover $\{G_1, \dotsc, G_L\}$ of
$\overline{\Omega}$ with the property that there exist $R > 0$ and
there exist open cones $C_1, \dotsc, C_L \subseteq \R^n$ such that
for any $x \in \partial \Omega \cap G_\ell$,
\[
C_\ell \cap B_R(0) \cap (x - \overline{\Omega}) = \emptyset.
\]
(Hence $(x - C_\ell) \cap B_R(x)$ is an exterior cone to $\overline{\Omega}$.)
This is possible, because $\Omega$ is a bounded Lipschitz domain.

For every $\ell = 1, \dotsc L$, choose $\eta_\ell \in C_0^\infty(C_\ell \cap B_1(0))$ with $\eta_\ell \ge 0$ and
\[
\int_{B_1(0)} \eta_\ell(x) \, dx = 1.
\]
For $\epsilon \in (0, R]$, set
\[
\eta_{\ell\epsilon}(x) = \frac{1}{\epsilon^n} \eta_\ell\left(\frac{x}{\epsilon}\right)
\]
and
\[
\phi_{\ell\epsilon} = \phi * \eta_{\ell\epsilon}.
\]
Then for $x \in \partial \Omega \cap G_\ell$,
\[
\phi_{\ell\epsilon}(x) = \int_{C_\ell \cap B_\epsilon(0) \cap (x - \Omega)} \eta_{\ell\epsilon}(y) \phi(x - y) \, dy = 0.
\]
Moreover, the function $\phi_{\ell\epsilon}$ vanishes in a neighbourhood of
any such point $x$.

Now choose a partition of unity $\chi_1, \dotsc, \chi_L$ in $\Omega$ with
$\chi_\ell \in C_0^\infty(G_\ell)$ for $\ell = 1, \dotsc, L$. Set
\[
\phi_\epsilon = \sum_{\ell = 1}^L \chi_\ell \phi_{\ell\epsilon}.
\]
Then it is clear that $\phi_\epsilon \in C_0^\infty(\Omega)$ and that
$\phi_\epsilon \to \phi$ in $W^{2, q}(\Omega)$, for any $q < \infty$, as
$\epsilon \searrow 0$.

For $x \in \Omega$, we compute
\[
\begin{split}
A(x) : D^2 \phi_{\ell\epsilon}(x) &= \int_{B_\epsilon(0)} \eta_{\ell\epsilon}(y) A(x) : D^2\phi(x - y) \, dy \\
& = \int_{B_\epsilon(0)} \eta_{\ell\epsilon}(y) (A(x) - A(x - y)) : D^2\phi(x - y) \, dy \\
& \quad + \int_{B_\epsilon(0)} \eta_{\ell\epsilon}(y) A(x - y) : D^2\phi(x - y) \, dy.
\end{split}
\]
Using an integration by parts, we find that
\begin{multline*}
\int_{B_\epsilon(0)} \eta_{\ell\epsilon}(y) (A(x) - A(x - y)) : D^2\phi(x - y) \, dy \\
\begin{aligned}
&= \int_{B_\epsilon(0)} \eta_{\ell\epsilon}(y) \div A(x - y) \cdot D\phi(x - y) \, dy \\
& \quad + \int_{B_\epsilon(0)} (A(x) - A(x - y)) : D\eta_{\ell\epsilon}(y) \otimes D\phi(x - y) \, dy.
\end{aligned}
\end{multline*}
For $y \in B_\epsilon(0)$, we have the inequality
\[
|A(x) - A(x - y)| \le \epsilon \|A\|_{C^1(\overline{\Omega})}.
\]
Hence there exists a universal constant $C_1$ such that
\begin{multline*}
\left|\int_{B_\epsilon(0)} \eta_{\ell\epsilon}(y) (A(x) - A(x - y)) : D^2\phi(x - y) \, dy\right| \\
\le C_1\|A\|_{C^1(\overline{\Omega})} \bigl(1 + \|D\eta_\ell\|_{L^1(B_1(0))}\bigr) \|D\phi\|_{L^\infty(\Omega)}.
\end{multline*}
It is clear that
\[
\left|\int_{B_\epsilon(0)} \eta_{\ell\epsilon}(y) A(x - y) : D^2\phi(x - y) \, dy\right| \le \|A : D^2 \phi\|_{L^\infty(\Omega)}.
\]
Thus we obtain a uniform estimate for $\|A : D^2\phi_{\ell\epsilon}\|_{L^\infty(\Omega)}$. A similar estimate for $\phi_\epsilon$ is then easy to prove.

It follows that there exists a sequence $\epsilon_k \searrow 0$ such that
$A : D^2\phi_{\epsilon_k} \stackrel{*}{\rightharpoonup} g$, weakly* in
$L^\infty(\Omega)$, for some $g \in L^\infty(\Omega)$. For any $\psi \in C_0^\infty(\Omega)$, we then compute
\[
\begin{split}
\int_\Omega \psi g \, dx &= \lim_{k \to \infty} \int_\Omega \psi A : D^2 \phi_{\epsilon_k} \, dx \\
&= \lim_{k \to \infty} \int_\Omega \div \div (\psi A) \phi_{\epsilon_k} \, dx \\
&= \int_\Omega \div \div (\psi A) \phi \, dx \\
&= \int_\Omega \psi A : D^2 \phi \, dx.
\end{split}
\]
It follows that $g = A : D^2\phi$. It then also follows that $A : D^2\phi_\epsilon \stackrel{*}{\rightharpoonup} A : D^2\phi$, i.e, it
is not necessary to take a subsequence.

Now the claim of the lemma is proved by approximation with $\phi_\epsilon$
and with standard arguments.
\end{proof}

\section{Sufficiency of the equations}

We now show that a non-trivial solution of the system \eqref{eq:Euler-Lagrange1},
\eqref{eq:Euler-Lagrange2} gives rise to an almost-minimiser of $E_\infty$.

\begin{proof}[Proof of Theorem \ref{thm:sufficiency}]
We consider $u_\infty \in \W^{2, \infty}(\Omega)$ and assume that there exist
$e_\infty \ge 0$ and $f_\infty \in L^1 \setminus \{0\}$ such that
\eqref{eq:Euler-Lagrange1} is satisfied almost everywhere and \eqref{eq:Euler-Lagrange2} weakly in $\Omega$. We are required to show that $u_\infty$
is an almost-minimiser of $E_\infty$.
It suffices, however, to prove the inequality in Definition \ref{def:almost-minimiser} under the assumption that
$\|\phi\|_{W^{1, \infty}(\Omega)} \le 1$, because otherwise,
\[
E_\infty(u_\infty) \le E_\infty(u_\infty + \phi) + M \|\phi\|_{W^{1, \infty}(\Omega)}^2
\]
for the number $M = E_\infty(u_\infty)$.

We first note that $f_\infty \in W_\loc^{1, q}(\Omega)$ for some $q > 1$
by Lemma \ref{lem:regularity}. Hence the equation $S_{u_\infty}^* f_\infty = 0$
can be written in the form
\[
\div\bigl(A Df_\infty + f_\infty \div A - f_\infty b_z(x, u_\infty, Du_\infty)\bigr) + f_\infty b_y(x, u_\infty, Du_\infty) = 0.
\]
With standard regularity theory for elliptic
equations, we then obtain higher regularity, in particular
$f_\infty \in W_\loc^{2, p}(\Omega)$ for every $p < \infty$,
and the results of
Hardt and Simon \cite{Hardt-Simon:89} on the structure of the nodal set
apply. It follows that $f_\infty \not= 0$ almost everywhere.

If $e_\infty = 0$, then \eqref{eq:Euler-Lagrange1} implies that $E_\infty(u_\infty) = 0$, and $u_\infty$ is in fact a global minimiser.
Thus it suffices to consider $e_\infty > 0$.

Fix $\phi \in \W_0^{2, \infty}(\Omega)$ with $\|\phi\|_{W^{1, \infty}(\Omega)} \le 1$. For $t \in \R$, note that
\[
\frac{\partial}{\partial t} S(u_\infty + t\phi) = S_{u_\infty + t\phi}' \phi
\]
and
\[
\begin{split}
\frac{\partial^2}{\partial t^2} S(u_\infty + t\phi) & = \phi^2 b_{yy}(x, u_\infty + t\phi, Du_\infty + tD\phi) \\
& \quad + 2 \phi D\phi \cdot b_{yz}(x, u_\infty + t\phi, Du_\infty + tD\phi) \\
& \quad + (D\phi \otimes D\phi) : b_{zz}(x, u_\infty + t\phi, Du_\infty + tD\phi).
\end{split}
\]
Thus Taylor's theorem, applied to the function $t \mapsto S(u_\infty + t\phi)(x)$ for each $x \in \Omega$,
implies there exists a function $\tau \colon \Omega \to [0, 1]$ such that
\begin{equation} \label{eqn:Taylor}
S(u_\infty + \phi) = S(u_\infty) + S_{u_\infty}' \phi + B_\tau
\end{equation}
almost everywhere in $\Omega$, where
\begin{multline*}
B_\tau = \frac{1}{2} \phi^2 b_{yy}(x, u_\infty + \tau \phi, Du_\infty + \tau D\phi) + \phi D\phi \cdot b_{yz}(x, u_\infty + \tau \phi, Du_\infty + \tau D\phi) \\
+ \frac{1}{2} (D\phi \otimes D\phi) : b_{zz}(x, u_\infty + \tau \phi, Du_\infty + \tau D\phi).
\end{multline*}
Therefore,
\begin{equation} \label{eq:S-squared}
(S(u_\infty + \phi))^2 = (S(u_\infty))^2 + 2 \bigl(S(u_\infty) + B_\tau\bigr)S_{u_\infty}' \phi +  (S_{u_\infty}' \phi)^2 + 2S(u_\infty) B_\tau
+ B_\tau^2.
\end{equation}

Formula \eqref{eqn:Taylor} implies that $B_\tau$ is measurable. Since $\|\phi\|_{W^{1, \infty}(\Omega)} \le 1$,
we have the estimate
\[
\|u_\infty + \tau \phi\|_{L^\infty(\Omega)} + \|Du_\infty + \tau D\phi\|_{L^\infty(\Omega)} \le C_1
\]
for a constant $C_1$ that is independent of $\phi$. Hence there exists a constant $C_2$, also independent of $\phi$, such that
\begin{equation} \label{eq:B-estimate}
\|B_\tau\|_{L^\infty(\Omega)} \le C_2 \|\phi\|_{W^{1, \infty}(\Omega)}^2.
\end{equation}

We now claim that
\begin{equation} \label{eq:almost-mimimiser}
E_\infty(u_\infty + \phi) \ge E_\infty(u_\infty) - 2C_2 \|\phi\|_{W^{1, \infty}(\Omega)}^2.
\end{equation}
Once this inequality is established, the proof is complete.

If $2C_2 \|\phi\|_{W^{1, \infty}(\Omega)}^2 > E_\infty(u_\infty)$, then \eqref{eq:almost-mimimiser} is obvious.
Thus we assume that $2C_2 \|\phi\|_{W^{1, \infty}(\Omega)}^2 \le E_\infty(u_\infty)$.

If $S_{u_\infty}' \phi = 0$ almost everywhere, then \eqref{eq:S-squared}
and \eqref{eq:B-estimate} imply that
\[
(S(u_\infty + \phi))^2 \ge (S(u_\infty))^2+ 2 S(u_\infty) B_\tau \ge (S(u_\infty))^2 - 2C_2 E_\infty(u_\infty) \|\phi\|_{W^{1, \infty}(\Omega)}^2
\]
almost everywhere. Therefore,
\begin{equation} \label{eq:pseudominimiser}
(E_\infty(u_\infty + t\phi))^2 \ge (E_\infty(u_\infty))^2 - 2C_2 E_\infty(u_\infty) \|\phi\|_{W^{1, \infty}(\Omega)}^2.
\end{equation}

If $S_{u_\infty}' \phi \neq 0$ in a set of positive measure, then we test equation \eqref{eq:Euler-Lagrange2} with $\phi$.
(This is possible in view of Lemma \ref{lem:distributional-solutions}.) We obtain
\[
\int_\Omega f_\infty \, S_{u_\infty}' \phi \, dx = 0.
\]
Recall that $f_\infty \neq 0$ almost everywhere.
Therefore, there exists a set $\Omega_+ \subseteq \Omega$ of positive measure such
that $f_\infty \, S_{u_\infty}' \phi > 0$ in $\Omega_+$. As $f_\infty$ has the same sign as $S(u_\infty)$
almost everywhere by \eqref{eq:Euler-Lagrange1},
this means that $S(u_\infty) \, S_{u_\infty}' \phi > 0$ almost everywhere in $\Omega_+$.
Equation \eqref{eq:Euler-Lagrange1} also implies that $|S(u_\infty)| = e_\infty$ almost everywhere. As
$\|\phi\|_{W^{1, \infty}(\Omega)} \le \sqrt{e_\infty/(2C_2)}$ by the above assumption, 
inequality \eqref{eq:B-estimate} implies that $S(u_\infty) + B_\tau$ has the same sign as $S(u_\infty)$ almost everywhere. Hence
\[
\bigl(S(u_\infty) + B_\tau\bigr) S_{u_\infty}' \phi > 0
\]
almost everywhere in $\Omega_+$. With the help of \eqref{eq:S-squared} and
\eqref{eq:B-estimate}, we conclude that
\[
(S(u_\infty + \phi))^2 \ge e_\infty^2 - 2C_2 e_\infty \|\phi\|_{W^{1, \infty}(\Omega)}^2
\]
in $\Omega_+$. Hence we obtain \eqref{eq:pseudominimiser} in this case as well.

Finally, from \eqref{eq:pseudominimiser} we now obtain the estimate
\[
\begin{split}
E_\infty(u_\infty + \phi) & \ge E_\infty(u_\infty)\sqrt{1 - \frac{2C_2 \|\phi\|_{W^{1, \infty}(\Omega)}^2}{E_\infty(u_\infty)}} \\
& \ge E_\infty(u_\infty)\left(1 - \frac{2C_2 \|\phi\|_{W^{1, \infty}(\Omega)}^2}{E_\infty(u_\infty)}\right) \\
& = E_\infty(u_\infty) - 2C_2 \|\phi\|_{W^{1, \infty}(\Omega)}^2.
\end{split}
\]
This proves \eqref{eq:almost-mimimiser} and completes the proof.
\end{proof}

\section{Necessity of the equations}

In this section, we prove Theorem \ref{thm:necessity}. For this purpose, we
require the following lemma, which is an extension of a result proved by the
authors in a previous paper \cite[Lemma 8]{Katzourakis-Moser:19}. This is
where the extra regularity assumptions in Theorem \ref{thm:necessity} are
used. For $r > 0$, we use the notation
$\Omega_r = \set{x \in \Omega}{\dist(x, \partial \Omega) < r}$ here.

\begin{lemma} \label{lem:boundary_behaviour}
Suppose that $\partial \Omega$ is of class $C^3$ and $u_0 \in C^2(\overline{\Omega})$.
Let $g \in C^0(\overline{\Omega})$ and $u \in C^1(\overline{\Omega})$.
Given $\epsilon > 0$, there exist $r > 0$ and $v \in C^2(\overline{\Omega})$,
with $v = u_0$ and $Dv = Du_0$ on $\partial \Omega$, such that
$\|S_u' v - g\|_{L^\infty(\Omega_r)} \le \epsilon$.
\end{lemma}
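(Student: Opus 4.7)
The plan is to exploit the fact that once the Dirichlet conditions $v = u_0$ and $Dv = Du_0$ are prescribed on $\partial\Omega$, the only remaining freedom in $D^2 v|_{\partial\Omega}$ is a single scalar, corresponding to the pure normal second derivative. Indeed, if $w = v - u_0$ satisfies $w = 0$ and $Dw = 0$ on $\partial\Omega$, then differentiating tangentially shows that every tangential and mixed tangential-normal entry of $D^2 w$ vanishes on $\partial\Omega$, so $D^2 w|_{\partial\Omega} = \alpha\, \nu \otimes \nu$ for some scalar function $\alpha \colon \partial\Omega \to \R$, where $\nu$ denotes the outward unit normal. Consequently, on $\partial\Omega$,
\[
S_u' v = A : D^2 u_0 + \alpha (A : \nu \otimes \nu) + b_z(x, u, Du) \cdot Du_0 + b_y(x, u, Du)\, u_0.
\]
Since $A : \nu \otimes \nu \ge \lambda > 0$ by ellipticity, demanding $S_u' v = g$ pointwise on $\partial\Omega$ determines a unique $\alpha \in C^0(\partial\Omega)$.

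I would then carry out the construction in three steps. First, extend $\alpha$ to a continuous function on $\overline\Omega$ and mollify to obtain $\hat\alpha \in C^2(\overline\Omega)$ with $\|\hat\alpha - \alpha\|_{L^\infty(\partial\Omega)} < \delta$, where $\delta > 0$ is a small parameter to be chosen. Second, using that $\partial\Omega$ is of class $C^3$, pick $r_0 > 0$ so small that the distance function $d(x) = \dist(x, \partial\Omega)$ belongs to $C^3(\overline{\Omega_{r_0}})$, with $|Dd| = 1$ and $Dd|_{\partial\Omega} = -\nu$. Choose a cutoff $\chi \in C_0^\infty([0, r_0))$ equal to $1$ in a neighbourhood of $0$, and set
\[
v(x) = u_0(x) + \tfrac{1}{2}\, d(x)^2\, \hat\alpha(x)\, \chi(d(x))
\]
on $\Omega_{r_0}$, with $v = u_0$ elsewhere. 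Third, verify directly that the correction $\frac{1}{2} d^2 \hat\alpha\, \chi(d)$ and its gradient vanish on $\partial\Omega$, while its Hessian there equals $\hat\alpha\, Dd \otimes Dd = \hat\alpha\, \nu \otimes \nu$. Thus $v \in C^2(\overline\Omega)$, $v = u_0$, $Dv = Du_0$ on $\partial\Omega$, and on $\partial\Omega$
\[
S_u' v - g = (\hat\alpha - \alpha)(A : \nu \otimes \nu).
\]
Choosing $\delta$ sufficiently small bounds this by $\epsilon/2$ uniformly on $\partial\Omega$, and the uniform continuity of $S_u' v - g$ on $\overline\Omega$ then yields $r \in (0, r_0)$ with $\|S_u' v - g\|_{L^\infty(\Omega_r)} \le \epsilon$.

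The main obstacle is the regularity bookkeeping. To make $v$ belong to $C^2(\overline\Omega)$ one must replace the a priori merely continuous $\alpha$ with a $C^2$ approximation $\hat\alpha$, at the cost of losing exact matching with $g$ on $\partial\Omega$; compensating for this is precisely why the statement allows a small error $\epsilon$. The $C^3$ regularity of $\partial\Omega$ is exactly what is needed to supply a $C^2$ distance-to-the-boundary function with which to construct the rank-one Hessian bump, and the rest of the argument reduces to the uniform continuity of the (by hypothesis continuous) coefficients of $S_u'$ on $\overline\Omega$.
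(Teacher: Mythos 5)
Your proposal is correct and takes essentially the same approach as the paper: the paper's proof also corrects $u_0$ by (squared distance to $\partial\Omega$) times a smooth approximation of the normal--normal coefficient forced by the ellipticity bound $A:\nu\otimes\nu\ge\lambda$, writing $v = u_0 + \rho^2 h/\lambda$ with $\lambda = 2A:D\rho\otimes D\rho$ and with $C^4$/$C^2$ approximations $\tilde u_0,\tilde g$ playing the role of your mollified $\hat\alpha$. The only cosmetic differences are where the smoothing is performed and that you conclude with uniform continuity on the collar, while the paper tracks an explicit error term of the form $\rho X$.
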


\begin{proof}
Let $\delta > 0$.
Choose $\tilde{u}_0 \in C^4(\overline{\Omega})$ such that
\[
\|u_0 - \tilde{u}_0\|_{C^2(\overline{\Omega})} \le \delta
\]
and choose $\tilde{g} \in C^2(\overline{\Omega})$ such that
\[
\bigl\|g - \tilde{g} - u_0 b_y(x, u, Du) - Du_0 \cdot b_z(x, u, Du)\bigr\|_{C^0(\overline{\Omega})} \le \delta.
\]

Consider a number $r_0 > 0$ such that the function $x \mapsto \dist(x, \partial \Omega)$ is of class $C^3$
in $\overline{\Omega}_{2r_0}$. We can construct a function $\rho \in C^3(\overline{\Omega})$ such that
$\rho(x) = \dist(x, \partial \Omega)$ for $x \in \Omega_{r_0}$.
Note that there exists $c > 0$ such that $A : D\rho \otimes D\rho \ge c$ in $\Omega_{r_0}$ by the fact that
$A$ is uniformly positive definite and $|D\rho| = 1$. Define
$\lambda \in C^2(\overline{\Omega})$ such that
$\lambda = 2A : D\rho \otimes D\rho$ in $\Omega_{r_0}$ and $\lambda > 0$ everywhere. Now define
\[
h = \tilde{g} - A : D^2 \tilde{u}_0
\]
and
\[
v = u_0 + \frac{\rho^2 h}{\lambda}.
\]
Then
\[
Dv = Du_0 + \frac{2\rho h}{\lambda} D\rho + \rho^2 D\left(\frac{h}{\lambda}\right)
\]
and
\begin{multline*}
D^2 v = D^2 u_0 + \frac{2h}{\lambda} D\rho \otimes D\rho \\
+ \frac{2\rho h}{\lambda} D^2 \rho + 2\rho D\rho \otimes D\left(\frac{h}{\lambda}\right) + 2\rho D\left(\frac{h}{\lambda}\right) \otimes D\rho + \rho^2 D^2 \left(\frac{h}{\lambda}\right).
\end{multline*}
Thus there exist $\Phi \in C^0(\overline{\Omega}; \R^n)$ and $\Psi \in C^0(\overline{\Omega}; \R^{n \times n})$ such that
\[
Dv = Du_0 + \rho \Phi
\]
and
\[
D^2 v = D^2 u_0 + \frac{2h}{\lambda} D\rho \otimes D\rho + \rho \Psi.
\]
In $\Omega_{r_0}$, it follows that
\[
\begin{split}
S_u' v & = A : D^2u_0 + \frac{2h}{\lambda} A : D\rho \otimes D\rho + \rho A : \Psi \\
& \quad + \left(u_0 + \frac{\rho^2h}{\lambda}\right) b_y(x, u, Du) + \left(Du_0 + \rho \Phi\right) \cdot b_z(x, u, Du) \\
& = A : D^2 u_0 + \tilde{g} - A : D^2 \tilde{u}_0  + u_0 b_y(x, u, Du) + Du_0 \cdot b_z(x, u, Du) + \rho X,
\end{split}
\]
where
\[
X = A : \Psi + \frac{\rho h}{\lambda} b_y(x, u, Du) + \Phi \cdot b_z(x, u, Du).
\]
By the choice of $\tilde{u}_0$ and $\tilde{g}$, we conclude that
\[
\|S_u' v - g\|_{L^\infty(\Omega_r)} \le (\|A\|_{L^\infty(\Omega)} + 1) \delta + r\|X\|_{L^\infty(\Omega)}.
\]
Choosing $\delta$ and $r$ sufficiently small, we obtain the desired inequality.

The boundary conditions are readily checked as well.
\end{proof}

\begin{proof}[Proof of Theorem \ref{thm:necessity}]
We assume that $\partial \Omega$ is of class $C^3$ and $u_0 \in C^2(\overline{\Omega})$. We further assume that $S$ is admissible.
Suppose that $u_\infty \in u_0 + \W_0^{2, \infty}(\Omega)$ is an almost-minimiser
of $E_\infty$. We wish to show that there exist $e_\infty \ge 0$ and
$f_\infty \in L^1(\Omega) \setminus \{0\}$ such that \eqref{eq:Euler-Lagrange1}
is satisfied almost everywhere and \eqref{eq:Euler-Lagrange2} weakly.

If $E_\infty(u_\infty) = 0$, then we choose
$e_\infty = 0$. By a consequence of the Fredholm alternative \cite[Theorem 6.2.4]{Evans:98}, we can either find a nontrivial
solution of $S_{u_\infty}^* f_\infty = 0$ in $W_0^{1, 2}(\Omega)$, or we can solve the boundary value problem
\begin{alignat*}{2}
S_{u_\infty}^* f_\infty & = 0 & \quad & \text{in $\Omega$}, \\
f_\infty & = 1 && \text{on $\partial \Omega$},
\end{alignat*}
in $W^{1, 2}(\Omega)$.
In either case, we conclude that $f_\infty \in L^1(\Omega)$ and does not vanish identically. Hence the required conditions are satisfied.

We now assume that
$E_\infty(u_\infty) > 0$. Because $u_\infty$ is an almost-minimiser,
there exists $M \in \R$ such that
\[
E_\infty(u_\infty) \le E_\infty(v) + M \|u_\infty - v\|_{W^{1, \infty}(\Omega)}^2
\]
for all $v \in u_0 + \W_0^{2, \infty}(\Omega)$. Choose $p_0 > n$ such
that the statement from Definition \ref{def:admissible} applies.
Note that by \cite[Lemma 9.17]{Gilbarg-Trudinger:83}, there exists
a constant $C$ such that for all $\phi \in W_0^{2, p_0}(\Omega)$,
the inequality
\[
\|\phi\|_{W^{2, p_0}(\Omega)} \le C\|A : D^2\phi\|_{L^{p_0}(\Omega)}
\]
holds true. In conjunction with the Sobolev embedding theorem, this implies
that there exists $\mu > 0$ such that
\[
E_\infty(u_\infty) \le E_\infty(v) + \mu \|A : D^2(u_\infty - v)\|_{L^{p_0}(\Omega)}^2
\]
under the above assumptions.

For $p < \infty$, we consider the functionals
\[
E_p(u) = \left(\fint_\Omega |S(u)|^p \, dx\right)^{\frac{1}{p}}.
\]
Furthermore, we fix $\sigma > 0$ and define
\[
E_p^\sigma(u) = E_p(u) + \sigma \|A : D^2(u_\infty - u)\|_{L^{p_0}(\Omega)}^2.
\]
For every $p \ge p_0$, choose a minimiser $u_p \in u_0 + W_0^{2, p}(\Omega)$ of $E_p^\sigma$.
(This can be found with the direct method. Coercivity of the functional is
a consequence of the assumption that $S$ is admissible, similarly to the proof of Proposition \ref{prp:minimisers}.
Lower semicontinuity of $E_p$ with respect to weak convergence in $W^{2, p}(\Omega)$ is also proved
analogously to Proposition \ref{prp:minimisers}, and the lower semicontinuity of the additional term
follows from its convexity.)
For $p_0 \le p \le q$, the minimality of $E_p(u_p)$ and H\"older's inequality
imply that
\begin{equation} \label{eqn:monotonicity}
E_p^\sigma(u_p) \le E_p^\sigma(u_q) \le E_q^\sigma(u_q) \le E_q^\sigma(u_\infty) = E_q(u_\infty) \le E_\infty(u_\infty).
\end{equation}
Because $S$ is admissible, we infer that
\begin{equation} \label{eqn:u_p-estimate}
\limsup_{p \to \infty} \|u_p\|_{W^{2, q}(\Omega)} < \infty
\end{equation}
for any $q < \infty$. We may therefore choose a sequence $p_k \to \infty$
and find $w_\infty \in u_0 + \bigcap_{q < \infty} W_0^{2, q}(\Omega)$ such
that $u_{p_k} \rightharpoonup w_\infty$
weakly in $W^{2, q}(\Omega)$ for every $q < \infty$. Then we also have the
strong convergence $u_{p_k} \to w_\infty$ in $W^{1, \infty}(\Omega)$.
It further follows that $S(u_{p_k}) \rightharpoonup S(w_\infty)$
weakly in $L^q(\Omega)$ for every $q < \infty$.

By the lower semicontinuity of the $L^q$-norm with respect to weak convergence and
by \eqref{eqn:monotonicity}, we have the inequalities
\begin{equation} \label{eq:lower-semicontinuity}
\begin{split}
E_\infty^\sigma(w_\infty) & = \lim_{q \to \infty} E_q^\sigma(w_\infty) \\
& \le \limsup_{q \to \infty} \liminf_{k \to \infty} E_q^\sigma(u_{p_k}) \\
& \le \liminf_{k \to \infty} E_{p_k}^\sigma(u_{p_k}) \\
& \le E_\infty(u_\infty).
\end{split}
\end{equation}
Hence $w_0 \in u_0 + \W_0^{2, \infty}(\Omega)$. As $u_\infty$ is an almost-minimiser of $E_\infty$, we also know that
\[
E_\infty(u_\infty) \le E_\infty^\mu(w_\infty).
\]
Hence
\[
(\sigma - \mu) \|A : D^2(w_\infty - u_\infty)\|_{L^{p_0}(\Omega)} \le 0,
\]
which implies that $w_\infty = u_\infty$ if $\sigma > \mu$. As this fixes the limit, we can in fact conclude that
$u_p \rightharpoonup u_\infty$, as $p \to \infty$, weakly in $W^{2, q}(\Omega)$ for every $q < \infty$.
It also follows from \eqref{eq:lower-semicontinuity} and \eqref{eqn:monotonicity} that $E_p^\sigma(u_p) \to E_\infty(u_\infty)$ as $p \to \infty$.

We further observe that for any sequence $p_k \to \infty$,
\begin{multline*}
E_\infty(u_\infty) + \sigma \liminf_{k \to \infty} \|A : D^2(u_{p_k} - u_\infty)\|_{L^{p_0}(\Omega)}^2 \\
\begin{aligned}
& = \lim_{q \to \infty} E_q(u_\infty) + \sigma \liminf_{k \to \infty} \|A : D^2(u_{p_k} - u_\infty)\|_{L^{p_0}(\Omega)}^2 \\
& \le \limsup_{q \to \infty} \liminf_{k \to \infty} E_q(u_{p_k}) + \sigma \liminf_{k \to \infty} \|A : D^2(u_{p_k} - u_\infty)\|_{L^{p_0}(\Omega)}^2 \\
& \le \limsup_{q \to \infty} \liminf_{k \to \infty} E_q^\sigma(u_{p_k}).
\end{aligned}
\end{multline*}
For every fixed $q < \infty$, H\"older's inequality gives
$E_q^\sigma(u_{p_k}) \le E_{p_k}^\sigma(u_{p_k})$ whenever $k$ is sufficiently large. Hence
\[
\liminf_{k \to \infty} E_q^\sigma(u_{p_k}) \le \liminf_{k \to \infty} E_{p_k}^\sigma(u_{p_k}),
\]
and we conclude that
\[
E_\infty(u_\infty) + \sigma \liminf_{k \to \infty} \|A : D^2(u_{p_k} - u_\infty)\|_{L^{p_0}(\Omega)}^2 \le \liminf_{k \to \infty} E_{p_k}^\sigma(u_{p_k}) = E_\infty(u_\infty).
\]
It follows that
\[
\lim_{p \to \infty} \|A : D^2(u_p - u_\infty)\|_{L^{p_0}(\Omega)} = 0.
\]
Set $e_\infty = E_\infty(u_\infty)$ and $e_p = E_p(u_p)$. Then it follows that
\[
e_\infty = \lim_{p \to \infty} e_p.
\]
Set furthermore
\[
a_p = \|A : D^2(u_p - u_\infty)\|_{L^{p_0}(\Omega)}.
\]
Then the Euler-Lagrange equation for $u_p$ is
\begin{multline*}
e_p^{1 - p} S_{u_p}^* \bigl(|S(u_p)|^{p - 2} S(u_p)\bigr) \\
+ 2\sigma |\Omega| a_p^{2 - p_0} \div \div\bigl(|A : D^2(u_p - u_\infty)|^{p_0 - 2} (A : D^2(u_p - u_\infty)) A\bigr) = 0.
\end{multline*}
Set
\[
f_p = e_p^{1 - p} |S(u_p)|^{p - 2} S(u_p)
\]
and
\[
\phi_p = a_p^{2 - p_0} |A : D^2(u_p - u_\infty)|^{p_0 - 2} A : D^2(u_p - u_\infty).
\]
Then we have the system
\begin{align}
|f_p|^{\frac{p - 2}{p - 1}} S(u_p)  & = e_p f_p, \label{eqn:Euler-Lagrange_p1} \\
S_{u_p}^* f_p + 2\sigma |\Omega| \div \div (\phi_p A) & = 0. \label{eqn:Euler-Lagrange_p2}
\end{align}

We compute
\begin{equation} \label{eqn:L^{p'}-estimate}
\fint_\Omega |f_p|^{p/(p - 1)} \, dx = e_p^{-p} \fint_\Omega |S(u_p)|^p \, dx = 1.
\end{equation}
Hence we can find a sequence $p_k \to \infty$ such that $f_{p_k}$ converges, in the weak* sense in $(C^0(\overline{\Omega}))^*$, to a Radon measure $F_\infty$ on $\overline{\Omega}$. Testing equation \eqref{eqn:Euler-Lagrange_p2} with $\eta \in C_0^\infty(\Omega)$, we see that
\[
\begin{split}
\int_\Omega S_{u_\infty}' \eta \, dF_\infty & = \lim_{k \to \infty} \int_\Omega f_{p_k} S_{u_\infty}' \eta \, dx \\
& = \lim_{k \to \infty} \int_\Omega f_{p_k} S_{u_{p_k}}' \eta \, dx \\
& = - 2\sigma |\Omega| \lim_{k \to \infty} \int_\Omega \phi_{p_k} A : D^2 \eta \, dx.
\end{split}
\]
(In the second step, we have used the fact that $b_y(x, u_p, Du_p) \to b_y(x, u_\infty, Du_\infty)$ and $b_z(x, u_p, Du_p) \to b_z(x, u_\infty, Du_\infty)$
uniformly as $p \to \infty$.)
If $p_0'$ is the exponent conjugate to $p_0$, then
\begin{equation} \label{eq:convergence-phi_p}
\|\phi_p\|_{L^{p_0'}(\Omega)} = a_p^{2 - p_0} \left(\int_\Omega |A : D^2(u_p - u_\infty)|^{p_0} \, dx\right)^{\frac{p_0 - 1}{p_0}} = a_p \to 0
\end{equation}
as $p \to \infty$. Hence $F_\infty$ is a distributional solution of
$S_{u_\infty}^* F_\infty = 0$. According to Lemma \ref{lem:regularity},
its restriction to $\Omega$ is absolutely continuous with respect to the Lebesgue measure, and the Radon-Nikodym derivative
$f_\infty \in L^1(\Omega)$ belongs to $W_\loc^{1, q}(\Omega)$ for all $q \in (1, \frac{n}{n - 1})$ (but $F_\infty$ may have a part supported on $\partial \Omega$ as well). Obviously, we now have equation \eqref{eq:Euler-Lagrange2}
in the weak sense.

Set $h_p = f_p + 2\sigma |\Omega| \phi_p$. Then \eqref{eqn:Euler-Lagrange_p2}
can be written in the form
\[
S_{u_p}^* h_p = 2\sigma |\Omega| \left(\phi_p b_y(x, u_p, Du_p) - \div \bigl(\phi_p b_z(x, u_p, Du_p)\bigr)\right).
\]
We already know that
the functions $b_z(x, u_p, Du_p)$ and $b_y(x, u_p, Du_p)$ are uniformly
bounded in $L^\infty(\Omega)$. Because of \eqref{eq:convergence-phi_p},
we have uniform bounds for $\phi_p b_y(x, u_p, Du_p)$ and for $\phi_p b_z(x, u_p, Du_p)$ in $L^{p_0'}(\Omega)$. The coefficients of $S_{u_p}^*$ are also bounded in
$L^\infty(\Omega)$.
According to Lemma \ref{lem:regularity}, this means that we have a uniform bound
for $\|h_p\|_{W^{1, p_0'}(\Omega')}$ for any precompact open set
$\Omega' \Subset \Omega$. In particular, we can choose the above
sequence $p_k \to \infty$ such that $(h_{p_k})_{k \in \N}$
converges in $L^{p_0'}(\Omega')$. Since we have the convergence $\phi_p \to 0$
in the same space by
\eqref{eq:convergence-phi_p}, we must have $L^{p_0'}$-convergence
for $(f_{p_k})_{k \in \N}$. The limit is of course $f_\infty$.

We may assume that $f_{p_k} \to f_\infty$ almost everywhere in $\Omega'$ and,
at the same time, that $\|f_{p_k} - f_\infty\|_{L^{p_0'}(\Omega')} \le 2^{-k}$
for every $k \in \N$ (otherwise we choose a further subsequence with this
property). If we set
\[
\theta = |f_\infty| + \sum_{k = 1}^\infty |f_{p_k} - f_\infty|,
\]
then this guarantees that $\theta \in L^{p_0'}(\Omega')$. Moreover, we see
that $|f_{p_k}| \le \theta$ for every $k \in \N$. We therefore obtain the pointwise inequality
\[
\left| |f_{p_k}|^{\frac{p_k - 2}{p_k - 1}} - |f_\infty| \right|^{p_0'} \le \left(1 + 2\theta\right)^{p_0'}
\]
(provided that $p_k > 2$), and the dominated convergence theorem implies that
\[
|f_{p_k}|^{\frac{p_k - 2}{p_k - 1}} \to |f_\infty|
\]
in $L^{p_0'}(\Omega')$.
Since we have the weak convergence of $S(u_p)$ to $S(u_\infty)$ in
$L^q(\Omega)$ for any $q < \infty$, we conclude that
\[
|f_{p_k}|^{\frac{p_k - 2}{p_k - 1}} S(u_{p_k}) \rightharpoonup |f_\infty| S(u_\infty)
\]
weakly in $L_\loc^1(\Omega)$. Recall that $e_p \to e_\infty$ as $p \to \infty$. Thus passing to the
limit in \eqref{eqn:Euler-Lagrange_p1}, we obtain \eqref{eq:Euler-Lagrange1}.

It remains to show that $f_\infty \not\equiv 0$.

For any $p \ge p_0$, we compute
\[
|\Omega| e_p = e_p^{1 - p} \int_\Omega |S(u_p)|^p \, dx = \int_\Omega f_p S(u_p) \, dx = \int_\Omega f_p\left(S_{u_p}' u_p - g_p\right) \, dx,
\]
where
\[
g_p = - b(x, u_p, Du_p) + u_p b_y(x, u_p, Du_p) + Du_p \cdot b_z(x, u_p, Du_p).
\]
We define $g_\infty$ by the analogous formula as well.
Given $\epsilon > 0$, we choose $r > 0$ and $v \in u_0 + \W_0^{2, \infty}(\Omega)$ with
\[
\|S_{u_\infty}'v - g_\infty\|_{L^\infty(\Omega_r)} \le \epsilon
\]
with the help of Lemma \ref{lem:boundary_behaviour}. Then,
by \eqref{eqn:Euler-Lagrange_p2},
\[
\begin{split}
|\Omega| e_p & = \int_\Omega f_p\left(S_{u_p}' u_p - g_p\right) \, dx \\
& = \int_\Omega f_p\left(S_{u_p}' v - g_p\right) \, dx - 2\sigma |\Omega| \int_\Omega \phi_p A : D^2(u_p - v) \, dx \\
& = \int_{\Omega_r} f_p\left(S_{u_\infty}' v - g_\infty\right) \, dx + \int_{\Omega_r} f_p\left(S_{u_p}' v - S_{u_\infty}' v - g_p + g_\infty\right) \, dx \\
& \quad + \int_{\Omega \setminus \Omega_r} f_p\left(S_{u_p}' v - g_p\right) \, dx - 2\sigma |\Omega| \int_\Omega \phi_p A : D^2(u_p - v) \, dx \\
& \le \epsilon |\Omega| + \int_{\Omega_r} f_p\left(S_{u_p}' v - S_{u_\infty}' v - g_p + g_\infty\right) \, dx \\
& \quad + \int_{\Omega \setminus \Omega_r} f_p\left(S_{u_p}' v - g_p\right) \, dx + 2 \sigma |\Omega| a_p \, \|A : D^2(u_p - v)\|_{L^{p_0}(\Omega)}.
\end{split}
\]
Letting $p \to \infty$, we note that
$S_{u_p}' v \to S_{u_\infty}' v$ and $g_p \to g_\infty$ uniformly in $\Omega$,
while
\[
\limsup_{p \to \infty} \|f_p\|_{L^1(\Omega)} < \infty
\]
by \eqref{eqn:L^{p'}-estimate}. Furthermore, we know that $a_p \to 0$, while $A : D^2 u_p$ is uniformly bounded in $L^{p_0}(\Omega)$. We further know that
$f_{p_k} \to f_\infty$ in $L^1(\Omega \setminus \Omega_r)$. Hence
\[
|\Omega| (e_\infty - \epsilon) \le \int_{\Omega \setminus \Omega_r} f_\infty (S_{u_\infty}' v - g_\infty) \, dx.
\]
Choosing $\epsilon < e_\infty$, we conclude that the integral on the right-hand side does not vanish. Hence $f_\infty \not\equiv 0$,
and this concludes the proof.
\end{proof}

\section{Examples}

The condition for admissible operators in Definition \ref{def:admissible} often follows from standard estimates
for linear operators. For semilinear ones, some additional arguments are sometimes required.
In this section, we consider two examples. We restrict our attention to operators
of the form
\[
S(u) = \Delta u + g(u)
\]
for some function $g \in C^2(\R)$ here. We show that $S$ is admissible if
either $g$ has the correct sign or satisfies a suitable growth condition.

\begin{proposition}
If $y g(y) \le 0$ for all $y \in \R$, then $S$ is admissible.
\end{proposition}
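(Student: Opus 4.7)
The plan is to split the estimate into two steps: first obtain an a priori $L^\infty$-bound on $u$ using the sign condition, and then upgrade it to a $W^{2,p}$-bound by treating $\Delta u = S(u) - g(u)$ as a linear Poisson equation with bounded right-hand side. I fix $p_0 > n/2$, take any $p \ge p_0$, $u_0 \in \W^{2,\infty}(\Omega)$, $\Lambda > 0$, and $u \in u_0 + \W_0^{2,p}(\Omega)$ with $\|S(u)\|_{L^p(\Omega)} \le \Lambda$, and set $h = S(u)$, so that $\Delta u + g(u) = h$.

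For the $L^\infty$-bound I would use Stampacchia's truncation method. With $K_0 = \|u_0\|_{L^\infty(\Omega)}$ and $\phi = (u - K)^+$ for $K \ge K_0$, the inequality $u \le u_0 \le K_0 \le K$ on $\partial\Omega$ in the trace sense guarantees $\phi \in W_0^{1,2}(\Omega)$; Sobolev embedding ($p > n/2$) in fact makes $u$ continuous, so this is also meaningful pointwise. Multiplying the equation by $\phi$ and integrating by parts yields
\[
\int_\Omega |\nabla\phi|^2 \, dx = \int_\Omega g(u)\phi \, dx - \int_\Omega h\phi \, dx.
\]
On the set $E(K) = \{u > K\}$ one has $u > 0$, so the hypothesis $y\,g(y) \le 0$ forces $g(u) \le 0$ and the first term on the right is non-positive, hence may be discarded. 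Combining Hölder's inequality on $E(K)$ with the Sobolev embedding $W_0^{1,2} \hookrightarrow L^{2^*}$ then produces
\[
(K' - K) \, |E(K')|^{1/2^*} \le C\Lambda \, |E(K)|^{1/p' - 1/2^*}
\]
for all $K' > K \ge K_0$. Since $p > n/2$ is precisely equivalent to $2^*/p' - 1 > 1$, the standard Stampacchia iteration lemma yields $u \le K_*$ almost everywhere for some $K_*$ depending only on $\Lambda$, $|\Omega|$, $n$, $p$, and $K_0$. The symmetric argument applied to $(-u - K)^+$, which uses $g(u) \ge 0$ on $\{u < 0\}$, provides the matching lower bound, so that $\|u\|_{L^\infty(\Omega)} \le K_*$.

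With the $L^\infty$-bound in hand, continuity of $g$ gives $\|g(u)\|_{L^\infty(\Omega)} \le \sup_{|y| \le K_*} |g(y)|$ and hence $\|\Delta u\|_{L^p(\Omega)} \le \Lambda + |\Omega|^{1/p}\sup_{|y|\le K_*}|g(y)|$. Writing $u = u_0 + v$ with $v \in W_0^{2,p}(\Omega)$ and invoking the linear $L^p$-estimate for the Dirichlet Laplacian (in the form of \cite[Lemma 9.17]{Gilbarg-Trudinger:83}, already used in the proof of Theorem~\ref{thm:necessity}), one has $\|v\|_{W^{2,p}(\Omega)} \le C\|\Delta v\|_{L^p(\Omega)}$, and the triangle inequality closes the argument.

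The main obstacle is really the first step: without any pointwise control on $u$ the nonlinear term $g(u)$ cannot be absorbed as a lower-order perturbation of $\Delta u$, so some genuine use of the structural hypothesis is unavoidable. The truncation $(u - K)^+$ is designed precisely so that the region where the nonlinearity is present coincides with the region where the sign condition allows it to be discarded; everything else then reduces to standard linear elliptic theory. The threshold $p_0 > n/2$ is forced only by the Sobolev exponent needed to run Stampacchia's iteration, and is compatible with Definition~\ref{def:admissible}, which merely asks for the existence of some $p_0 > 1$.
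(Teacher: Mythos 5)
Your proof is correct, and it reaches the same intermediate goal as the paper — a uniform $L^\infty$-bound on $u$ obtained from the sign condition, followed by the linear Calder\'on--Zygmund estimate $\|v\|_{W^{2,p}(\Omega)} \le C\|\Delta v\|_{L^p(\Omega)}$ for $v = u - u_0 \in W_0^{2,p}(\Omega)$ — but it gets the $L^\infty$-bound by a genuinely different mechanism. The paper argues by comparison: it solves the auxiliary Dirichlet problem $\Delta w = -|S(u)|$ in $\Omega$, $w = \sup_{\partial\Omega}|u_0|$ on $\partial\Omega$, bounds $w$ in $L^\infty$ via the $W^{2,p_0}$-estimate and Sobolev embedding (this is where $p_0 > n/2$ enters there), and then observes that on $\{u > 0\}$ one has $\Delta u = S(u) - g(u) \ge S(u) \ge \Delta w$, so the comparison/maximum principle gives $u \le w$; the lower bound is symmetric. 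You instead run Stampacchia's truncation with $\phi = (u-K)^+$, $K \ge \|u_0\|_{L^\infty(\Omega)}$, using exactly the same structural fact (that $g(u)\phi \le 0$ on the level set, since $u > K \ge 0$ there) to discard the nonlinearity, and the exponent condition $2^*/p'-1>1$ for the iteration is indeed equivalent to $p > n/2$ (for $n \ge 3$). Your route is purely energy-based: it avoids solving an auxiliary problem and avoids any maximum principle, which makes it somewhat more robust (it would carry over verbatim to divergence-form principal parts with merely bounded measurable coefficients), at the cost of the level-set bookkeeping; the paper's comparison argument is shorter and works uniformly in all dimensions without adjusting the Sobolev exponent, whereas your statement of the iteration implicitly assumes $n \ge 3$ and needs the usual cosmetic modification of $2^*$ for $n \le 2$ (or the trivial one-dimensional argument). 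All boundary and integrability details you use ($u$ continuous since $p > n/2$, zero trace of $(u-K)^+$, integrability of $g(u)\phi$) are in order, so the argument stands as a valid alternative proof.
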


\begin{proof}
Choose any $p_0 > n/2$. Given $u_0 \in \W^{2, \infty}(\Omega)$, let
\[
\alpha = \sup_{x \in \partial \Omega} |u_0(x)|.
\]
Suppose that $u \in u_0 + \W_0^{2, \infty}(\Omega)$ satisfies
$\|S(u)\|_{L^p(\Omega)} \le \Lambda$ for some $p \ge p_0$. Then
$\|S(u)\|_{L^{p_0}(\Omega)} \le |\Omega|^{1/p_0 -1/p} \Lambda$ by
H\"older's inequality.
We now solve the boundary value problem
\begin{alignat*}{2}
\Delta w & = -|S(u)| & \quad & \text{in $\Omega$}, \\
w & = \alpha && \text{on $\partial \Omega$}.
\end{alignat*}
Then $w \ge \alpha$ by the maximum principle. Furthermore, using $L^p$-estimates
for the Laplacian and the Sobolev embedding theorem, we see that $w$ is bounded by
a constant that depends only on $n$, $\Omega$, $p_0$, $u_0$, and $\Lambda$.

In the set
$\set{x \in \Omega}{u(x) > 0}$, we have the inequality
\[
\Delta u = S(u) - g(u) \ge S(u) \ge \Delta w.
\] 
Hence
the comparison principle implies that $u \le w$ in $\Omega$.

Similarly, we show that $u$ is bounded from below by a constant depending only on $n$, $\Omega$, $p_0$, $u_0$, and $\Lambda$.
The condition from Definition \ref{def:admissible} then follows with standard elliptic estimates.
\end{proof}

For our second example, we assume that
\begin{equation} \label{eq:growth-implicit}
\lim_{y \to \pm \infty} \frac{yg(y)}{\int_0^y g(t) \, dt} = \alpha.
\end{equation}
That is, the function $g$ has asymptotic growth at $\pm \infty$ like $y \mapsto cy^{\alpha - 1}$ for some $c \in \R$.

\begin{proposition}
Let $n \ge 3$. Suppose that $g$ satisfies \eqref{eq:growth-implicit}
for some $\alpha \in [2, \frac{2n}{n - 2})$. Then $S$ is admissible.
\end{proposition}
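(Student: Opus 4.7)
The strategy is to first establish an a priori $L^\infty$ bound on $u \in u_0 + \W_0^{2,p}(\Omega)$ satisfying $\|S(u)\|_{L^p(\Omega)} \le \Lambda$; from such a bound the $W^{2,p}$ estimate follows by standard Calder\'on--Zygmund theory applied to $\Delta u = S(u) - g(u)$, together with $\|g(u)\|_{L^\infty(\Omega)} \le C(\|u\|_{L^\infty(\Omega)})$. I would prove the $L^\infty$ bound by a blow-up argument of Gidas--Spruck type. Suppose, for contradiction, that there is a sequence $u_k \in u_0 + \W_0^{2,p}(\Omega)$ with $\|S(u_k)\|_{L^p(\Omega)} \le \Lambda$ but $M_k := \|u_k\|_{L^\infty(\Omega)} \to \infty$, and pick $x_k \in \Omega$ with $|u_k(x_k)| \ge M_k/2$ (necessarily interior for large $k$, since $u_0$ is bounded). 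Because $\alpha \ge 2$, the natural rescaling is
\[
v_k(y) = \frac{u_k(x_k + r_k y)}{M_k}, \qquad r_k = M_k^{(2-\alpha)/2},
\]
and the asymptotic condition \eqref{eq:growth-implicit} implies that $v_k$ satisfies, on the expanding domain $(\Omega - x_k)/r_k$, an equation of the form $\Delta v_k + h_k(v_k) = \rho_k$, where $h_k(v) \to c|v|^{\alpha-2}v$ locally uniformly (with an explicit constant $c \ne 0$ extracted from the leading asymptotic of $g$) and $\rho_k \to 0$ in $L^p_{\loc}(\R^n)$, provided $p_0 > n/2$.

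Since $|v_k| \le 1$, interior $L^q$-estimates for the Laplacian produce local uniform convergence $v_k \to v_\infty$ along a subsequence, where $v_\infty$ is a bounded solution of
\[
\Delta v_\infty + c|v_\infty|^{\alpha-2} v_\infty = 0
\]
on $\R^n$, or on a half-space with zero Dirichlet data (according to whether $\dist(x_k, \partial \Omega)/r_k$ stays bounded). The blow-up condition $|v_\infty(0)| \ge 1/2$ makes $v_\infty$ nontrivial, which contradicts a Liouville-type theorem valid precisely in the range $\alpha \in [2, 2n/(n-2))$. Hence $M_k$ must be bounded, and the $W^{2,p}$ estimate follows.

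The main obstacle is the Liouville step. In the nonnegative case, the classical results of Gidas--Spruck rule out nontrivial bounded solutions on $\R^n$ and on the half-space with zero boundary values exactly under the subcritical condition $\alpha - 1 < (n+2)/(n-2)$, i.e., $\alpha < 2n/(n-2)$. For sign-changing $v_\infty$, one restricts to the nodal component containing the origin (on which $v_\infty$ has fixed sign) and argues as above after a boundary analysis, or invokes later refinements such as those of Farina and Quittner--Souplet. A secondary technical point is the verification that the rescaled source $\rho_k$ decays in $L^p_{\loc}(\R^n)$, which pins the lower threshold $p_0 > n/2$ and uses the specific scaling $r_k = M_k^{(2-\alpha)/2}$; the borderline case $\alpha = 2$ has $r_k \equiv 1$ and reduces to a linear problem on the unchanged domain, handled separately via the Fredholm alternative applied to $\Delta u + c u = (\text{lower order})$.
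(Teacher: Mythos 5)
Your blow-up strategy has a genuine gap at the Liouville step, and it is not one that can be patched by the references you cite. Nothing in the problem forces the blow-up limit $v_\infty$ to have a sign: $u$ is an arbitrary element of $u_0 + \W_0^{2,p}(\Omega)$, so $v_\infty$ may change sign, and the Liouville theorem you need — no nontrivial \emph{bounded, possibly sign-changing} solution of $\Delta v + c|v|^{\alpha - 2}v = 0$ on $\R^n$ for $\alpha < \frac{2n}{n-2}$ — is false: the radial Cauchy problem for the subcritical Lane--Emden equation produces entire, bounded, oscillating (sign-changing) solutions. Restricting to the nodal component containing the origin does not help, because that component is an unknown unbounded open set, not $\R^n$ or a half-space, so Gidas--Spruck does not apply there; and the Farina / Quittner--Souplet refinements require stability or finite Morse index, properties the limit has no reason to inherit. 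A second problem is that \eqref{eq:growth-implicit} only says $G$ is regularly varying of index $\alpha$: $g$ may carry slowly varying factors and may have either sign at $+\infty$ and $-\infty$ independently (e.g.\ $g(y) = -|y|^{\alpha-2}y$ and $g(y) = y\log(2+|y|)$ with $\alpha = 2$ both satisfy the hypothesis). Consequently $M_k^{1-\alpha}g(M_k v)$ need not converge to a pure power $c|v|^{\alpha-2}v$; in the logarithmic example the correctly normalised limit equation is the linear Helmholtz equation $\Delta v + cv = 0$, which has plenty of bounded nontrivial solutions, so no contradiction is reached even though the proposition is true for that $g$. Your treatment of $\alpha = 2$ via the Fredholm alternative also overlooks that resonance ($g(y) \sim \lambda_k y$ with $\lambda_k$ a Dirichlet eigenvalue) is not excluded by the hypotheses; what saves that case is the clamped boundary condition built into $\W_0^{2,\infty}(\Omega)$ (both $u - u_0$ and its gradient vanish on $\partial\Omega$), which calls for unique continuation rather than Fredholm theory.

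For comparison, the paper avoids pointwise asymptotics and blow-up altogether. It first deduces from \eqref{eq:growth-implicit}, via a Gr\"onwall argument, the polynomial bounds $G(y) \le C_1(|y|^\beta + 1)$ and \eqref{eq:growth-explicit} for any $\beta \in (\alpha, \frac{2n}{n-2})$. It then tests the equation with $u$ and with the Pohozaev multiplier $x \cdot Du$, obtaining \eqref{eq:energy1} and \eqref{eq:energy2}; the subcriticality enters only through the positivity of $\frac{1}{\beta} - \frac{n-2}{2n}$ and the one-sided inequality $yg(y) \le \beta G(y)$ for large $|y|$, which together give $\|u\|_{W^{1,2}(\Omega)} \le C(\|S(u)\|_{L^2(\Omega)} + 1)$ regardless of the sign or oscillation structure of $u$. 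Standard bootstrapping with the subcritical growth \eqref{eq:growth-explicit} then yields the $W^{2,p}$ bound. If you want to salvage a blow-up proof, you would need an additional ingredient controlling the limit (positivity, decay, or finite Morse index), which this variational setting does not supply.
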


\begin{proof}
Define
\[
G(y) = \int_0^y g(t) \, dt.
\]
Given $\beta > \alpha$, inequality \eqref{eq:growth-implicit} implies that
$y G'(y) < \beta G(y)$ when $|y|$ is sufficiently large. The Gr\"onwall
inequality implies that there exists
$C_1 > 0$ with
\[
G(y) \le C_1 (|y|^\beta + 1)
\]
for all $y \in \R$. Using \eqref{eq:growth-implicit} again, we find another
constant $C_2$ such that
\begin{equation} \label{eq:growth-explicit}
|g(y)| \le C_2(|y|^{\beta - 1} + 1)
\end{equation}
for all $y \in \R$.

Now suppose that $u_0 \in \W^{2, \infty}(\Omega)$ and consider
$u \in u_0 + \W_0^{2, \infty}(\Omega)$. We write $\nu$ for the outer normal
vector on $\partial \Omega$ and $\sigma$ for the surface measure on
$\partial \Omega$. Then an integration by parts yields the
identity
\begin{equation} \label{eq:energy1}
\int_\Omega (|Du|^2 - ug(u)) \, dx = \int_{\partial \Omega} u_0 \nu \cdot Du_0 \, d\sigma - \int_\Omega u S(u) \, dx.
\end{equation}
We furthermore compute
\[
\div \left((x \cdot Du) Du - \left(\frac{1}{2} |Du|^2 - G(u)\right) x\right) = (x \cdot Du) S(u) - \frac{n - 2}{2} |Du|^2 + n G(u).
\]
Hence
\begin{multline} \label{eq:energy2}
\int_\Omega \left(\frac{n - 2}{2} |Du|^2 - n G(u)\right) \, dx = \int_\Omega (x \cdot Du) S(u) \, dx \\
- \int_{\partial \Omega} \left((x \cdot Du_0) (\nu \cdot Du_0) - \left(\frac{1}{2} |Du_0|^2 - G(u_0)\right) x \cdot \nu \right) \, d\sigma.
\end{multline}
Fix $\beta \in (\alpha, \frac{2n}{n - 2})$. Then the combination of
\eqref{eq:energy1} and \eqref{eq:energy2} implies that
\begin{multline} \label{eq:example2}
\left(\frac{1}{\beta} - \frac{n - 2}{2n}\right) \int_\Omega |Du|^2 \, dx = \int_\Omega \left(\frac{u g(u)}{\beta} - G(u)\right) \, dx \\
- \int_\Omega \left(\frac{1}{\beta} uS(u) + \frac{1}{n} (x \cdot Du) S(u)\right) \, dx + \frac{1}{\beta} \int_{\partial \Omega} u_0 \nu \cdot Du_0 \, d\sigma \\
+ \frac{1}{n} \int_{\partial \Omega} \left((x \cdot Du_0) (\nu \cdot Du_0) - \left(\frac{1}{2} |Du_0|^2 - G(u_0)\right) x \cdot \nu \right) \, d\sigma.
\end{multline}

Under the assumptions of the proposition, we know that $yg(y) \le \beta G(y)$ whenever $|y|$ is sufficiently large.
Hence there exists some constant $C_3$, depending only on $g$, such that
\[
\int_\Omega \left(\frac{u g(u)}{\beta} - G(u)\right) \, dx \le C_3 |\Omega|.
\]
The boundary integrals in \eqref{eq:example2} depend only on $n$, $\Omega$,
$u_0$, $g$, and $\beta$. Hence there exists $C_4 = C_4(n, \Omega, u_0, g, \alpha)$
such that
\[
\int_\Omega |Du|^2 \, dx \le C_4 \left(\|S(u)\|_{L^2(\Omega)} \|u\|_{W^{1, 2}(\Omega)} + 1\right).
\]
From this and the Poincar\'e inequality, we derive the estimate
\[
\|u\|_{W^{1, 2}(\Omega)} \le C_5\left(\|S(u)\|_{L^2(\Omega)} + 1\right)
\]
for a constant $C_5$ with the same dependence.

Fix $p_0 > n$ and consider $p \ge p_0$. If we assume that
\[
\|S(u)\|_{L^p(\Omega)} \le \Lambda,
\]
then we can now use standard bootstrapping arguments to derive higher estimates.
Since the growth of $g$ described in \eqref{eq:growth-explicit} is
subcritical for the purpose of such estimates, we will eventually obtain
a bound for $\|u\|_{W^{2, p}(\Omega)}$ that depends only on $n$, $\Omega$,
$u_0$, $g$, $p$, and $\Lambda$. Hence $S$ is admissible.
\end{proof}

\def\cprime{$'$}
\providecommand{\bysame}{\leavevmode\hbox to3em{\hrulefill}\thinspace}
\providecommand{\MR}{\relax\ifhmode\unskip\space\fi MR }
\providecommand{\MRhref}[2]{%
  \href{http://www.ams.org/mathscinet-getitem?mr=#1}{#2}
}
\providecommand{\href}[2]{#2}

\end{document}